\def\openC{{\rm C\kern-.18cm\vrule width.8pt height 7pt depth-.2pt \kern.18cm}}
\def\openN{{{\rm I}\kern-.16em {\rm N}}}
\def\openR{{{\rm I}\kern-.16em {\rm R}}}
\def\openT{{{\rm T}\kern-.42em {\rm T}}}
\def\openZ{{{\rm Z}\kern-.28em{\rm Z}}}
\newtheorem{thm}{Theorem}[section]
\newtheorem{cor}[thm]{Corollary}
\newtheorem{lem}[thm]{Lemma}
\theoremstyle{definition}
\begin{document}

\title{\textbf{Strictly positive definite kernels on two-point compact homogeneous spaces} \vspace{-2pt}
\author{\sc
V. S. Barbosa\ \,\,\&\,\, V. A. Menegatto}}
\date{}
\maketitle \vspace{-30pt}
\bigskip
\begin{center}
\parbox{13 cm}{{\small We present a necessary and sufficient condition for the strict positive definiteness of a real, continuous, isotropic and positive definite kernel on a two-point compact homogeneous space.\ The characterization adds to others previously obtained by D. Chen at all (2003) in the case in which the space is a sphere of dimension at least 2 and Menegatto at all (2006) in the case in which the space is the unit circle.\
As an application, we use the characterization to improve upon a recent result on the differentiability of positive definite kernels on the spaces. }}
\end{center}
\bigskip
\noindent{\bf Key words and phrases:} strict positive definiteness, isotropy, two-point homogeneous spaces, Jacobi polynomials, differentiability, addition formula.\\
\noindent{\bf 2010 Math. Subj. Class.:} 22F30; 33C50; 33C55; 41A63; 42A82.

\thispagestyle{empty}

%
%
\section{Introduction}
Let $\mathbb{M}^d$ denote a $d$-dimensional compact two-point homogeneous space.\ As pointed by Wang (\cite{wang}), $\mathbb{M}^d$ belongs to one of the following categories: the unit circle $S^1$, higher dimensional unit spheres $S^d$, $d=2,3\ldots$, the real projective spaces $\mathbb{P}^d(\mathbb{R})$, $d=2,3,\ldots$, the complex projective spaces
$\mathbb{P}^d(\mathbb{C})$, $d=4,6,\ldots$, the quaternionic projective spaces $\mathbb{P}^d(\mathbb{H})$, $d=8,12,\ldots$, and the Cayley projective plane
$\mathbb{P}^{d}(Cay)$, $d=16$.

In this paper, we will deal with real, continuous, isotropic (zonal), and positive definite kernels on $\mathbb{M}^d$.\ The {\em positive definiteness} of a real and symmetric kernel $K$ on $\mathbb{M}^d$ requires that
\begin{equation}\label{pd}
\sum_{\mu,\nu=1}^n c_\mu c_\nu K(x_\mu,x_\nu) \geq 0,
\end{equation}
whenever $n$ is a positive integer, $x_1,x_2, \ldots, x_n$ are distinct points on $\mathbb{M}^d$ and $c_1,c_2, \ldots, c_n$ are real scalars.\  The continuity of $K$ can be defined through the usual Riemannian (geodesic) distance on $\mathbb{M}^d$, here assumed to be normalized so that all geodesics on $\mathbb{M}^d$ have the same length $2\pi$.\ The distance between two points $x,y \in \mathbb{M}^d$ will be written as $|xy|$.\ Since $\mathbb{M}^d$ possesses a group of motions $G_d$ which takes any pair of points $(x,y)$ to $(z,w)$ when $|xy|=|zw|$, {\em isotropy} of a kernel $K$ on $\mathbb{M}^d$ will refer to the property
\begin{equation*}
K(x,y)=K(Ax,Ay), \quad x,y \in \mathbb{M}^d, \quad A \in G_d.
\end{equation*}
An isotropic kernel $K$ on $\mathbb{M}^d$ can be written in the form
\begin{equation*}
K(x,y)=K_r^d(\cos{(|xy|/2)}), \quad x,y \in \mathbb{M}^d,
\end{equation*}
for some function $K_r^d: [-1,1] \to \mathbb{R}$, here called the {\em isotropic part} of $K$.

According to \cite{bochner,gangolli}, a real, continuous and isotropic kernel $K$ on $\mathbb{M}^d$ is positive definite if and only if the isotropic part $K_r^d$ of $K$ has a series representation in the form
\begin{equation}\label{PDM}
K_r^d(t)=\sum_{k=0}^{\infty}a_k^{\alpha,\beta} P_k^{\alpha,\beta}(t), \quad t \in [-1,1],
\end{equation}
in which $a_k^{\alpha,\beta} \in [0,\infty)$, $k\in \mathbb{Z}_+$ and $\sum_{k=0}^{\infty}a_k^{\alpha,\beta} P_k^{\alpha,\beta}(1) <\infty$.\ The first upper exponent $\alpha$ depends only on the dimension $d$ and is given by $\alpha:=(d-2)/2$, whereas $\beta$ can take the values $(d-2)/2, -1/2, 0, 1, 3$, depending on the respective category $\mathbb{M}^d$ belongs to, among the five we have mentioned at the beginning of the paper.\ The symbol $P_k^{\alpha,\beta}$ stands for usual the Jacobi polynomial of degree $k$ associated with the pair $(\alpha,\beta)$ as defined in \cite{szego}.

The intended target in the present paper is to characterize the real, continuous, isotropic and strictly positive definite kernels on $\mathbb{M}^d$.\ A positive definite kernel $K$ on $\mathbb{M}^d$ is {\it strictly positive definite} if the inequality \eqref{pd} is strict for $n\geq 1$, distinct points $x_1,x_2,\hdots,x_n\in \mathbb{M}^d$ and scalars $c_1,c_2,\hdots,c_n$ not simultaneously zero.\ A characterization in the cases in which $\mathbb{M}^d$ is a sphere $S^d$ was previously obtained as we now update.\ In the case of $S^1$, the additional condition for the strict positive definiteness of the kernel is that the set
\begin{equation*}
\left\{k \in \mathbb{Z}: a_{|k|}^{-1/2,-1/2}>0\right\}
\end{equation*}
have a nonempty intersection with every full arithmetic progression in $\mathbb{Z}$.\ The actual arguments that lead to this characterization are in Theorems 2.2 and 2.9 in \cite{mene}.\ An independent and direct proof will be sketched in an appendix at the end of the present paper, for the convenience of the reader.\  As for $S^d$, $d\geq 2$, the additional condition is much simpler (\cite{chen}): the set
\begin{equation*}
\left\{k \in \mathbb{Z}_+: a_k^{(d-2)/2,(d-2)/2}>0\right\}
\end{equation*}
needs to contain infinitely many even integers and infinitely many odd integers.\ The main contribution in this paper is the following complement to the above information.

\begin{thm} \label{main}  Let $K$ be a real, continuous, isotropic and positive definite kernel on $\mathbb{M}^d$, $d\geq 2$.\ If $\mathbb{M}^d\neq S^d$, then $K$ is strictly positive definite if and only if the set $\{k \in \mathbb{Z}_+: a_k^{(d-2)/2,\beta}>0\}$ is infinite.
\end{thm}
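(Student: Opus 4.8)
The plan is to reduce the strict positive definiteness of $K$ to a combinatorial condition on the index set $J:=\{k\in\mathbb{Z}_+: a_k^{\alpha,\beta}>0\}$ by exploiting an addition formula for Jacobi polynomials on $\mathbb{M}^d$. Recall that on a two-point compact homogeneous space the space of spherical harmonics of level $k$ has dimension $d_k$, and there is an orthonormal basis $\{Y_{k,j}\}$ for which
\begin{equation*}
P_k^{\alpha,\beta}(\cos(|xy|/2))=\frac{P_k^{\alpha,\beta}(1)}{d_k}\sum_{j=1}^{d_k}Y_{k,j}(x)\overline{Y_{k,j}(y)}.
\end{equation*}
Substituting the expansion \eqref{PDM} into the quadratic form \eqref{pd}, I would obtain
\begin{equation*}
\sum_{\mu,\nu=1}^n c_\mu c_\nu K(x_\mu,x_\nu)=\sum_{k\in J}\frac{a_k^{\alpha,\beta}P_k^{\alpha,\beta}(1)}{d_k}\sum_{j=1}^{d_k}\Bigl|\sum_{\mu=1}^n c_\mu Y_{k,j}(x_\mu)\Bigr|^2,
\end{equation*}
so the form vanishes precisely when $\sum_{\mu}c_\mu Y_{k,j}(x_\mu)=0$ for every $k\in J$ and every $j$. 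Hence $K$ fails to be strictly positive definite if and only if there exist distinct points $x_1,\dots,x_n$ and nontrivial scalars $c_1,\dots,c_n$ that are simultaneously orthogonal, via point evaluations, to all spherical harmonics of every level $k\in J$.

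The heart of the argument is therefore to show that, when $\mathbb{M}^d\neq S^d$, a set $J$ detects all such nontrivial configurations if and only if $J$ is infinite. The necessity direction is easy: if $J$ is finite, then $\sum_{k\in J}d_k$ is finite, so the linear map $c\mapsto\bigl(\sum_\mu c_\mu Y_{k,j}(x_\mu)\bigr)_{k\in J,\,j}$ has a nontrivial kernel as soon as $n>\sum_{k\in J}d_k$, producing distinct points and scalars that kill the quadratic form. For sufficiency I would argue by contradiction: suppose $J$ is infinite but there are distinct $x_1,\dots,x_n$ and nonzero $c_1,\dots,c_n$ with $\sum_\mu c_\mu Y_{k,j}(x_\mu)=0$ for all $k\in J$, $j$. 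Equivalently, the measure $\sigma=\sum_\mu c_\mu\delta_{x_\mu}$ has vanishing Fourier coefficients at every level in $J$. The key fact to use is that the product of a level-$k$ harmonic with a level-$\ell$ harmonic expands into harmonics of levels $|k-\ell|,\dots,k+\ell$ (the Jacobi linearization / Clebsch--Gordan phenomenon); since $\beta\neq\alpha$ forces $d\geq 2$ and the relevant spaces are projective, one can show that a single geodesic distance function generates, through products, harmonics of arbitrarily high level, and more importantly that the ideal generated by $\{k\in J\}$ under this product structure is cofinite whenever $J$ is infinite. This is the crucial place where the non-sphere hypothesis enters: on $S^d$ the harmonics split by parity and an infinite $J$ concentrated on even integers generates only even levels, which is exactly why spheres need the stronger even/odd condition; on the other spaces there is no such $\mathbb{Z}_2$-grading obstruction, so an infinite $J$ suffices.

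Concretely, I would realize the above by picking two indices $k<\ell$ in $J$ with $\ell-k=1$ if possible, or more robustly by using that $J$ infinite implies $J$ contains indices $k$ and $k'$ with $\gcd$-type control, and then showing $\sigma$ annihilates all sufficiently high levels, hence $\sigma$ restricted to a cofinite part of its spectrum is zero; combined with finitely many remaining levels and the fact that $\sigma$ is supported on finitely many points, a dimension count (again $n$ finite against infinitely many independent constraints) forces $\sigma=0$, contradiction. The main obstacle I anticipate is making the ``products of harmonics in levels from $J$ reach every large level'' step precise and uniform across the four non-spherical families (real, complex, quaternionic projective spaces and the Cayley plane), since the linearization coefficients and the exact set of levels appearing in a product depend on the parameters $(\alpha,\beta)$; I would handle this by invoking the known nonnegativity and support properties of Jacobi linearization coefficients (Gasper's theorem), which guarantee that the product of $P_k^{\alpha,\beta}$ and $P_\ell^{\alpha,\beta}$ genuinely involves $P_{k+\ell}^{\alpha,\beta}$ and $P_{|k-\ell|}^{\alpha,\beta}$ with positive coefficient, so no cancellation can shrink the generated set of levels.
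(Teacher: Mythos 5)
Your reduction via the addition formula is exactly the paper's Lemma \ref{matrix}, and your necessity argument (a dimension count against $\sum_{k\in J}\delta(k,d)<\infty$) is correct --- in fact simpler than the paper's, which embeds $S^1$ and bounds the rank of powered Gram matrices. The problem is the sufficiency direction, which is the heart of the theorem, and there your central step does not work. You want to propagate the vanishing of the Fourier coefficients of $\sigma=\sum_\mu c_\mu\delta_{x_\mu}$ from the levels in $J$ to a cofinite set of levels by invoking the linearization of products of harmonics. But the hypothesis only says that $\sigma$ annihilates the \emph{linear span} of the harmonics of each level $k\in J$; it says nothing about $\sigma(Y_{k,i}Y_{\ell,j})$, so the fact that this product decomposes into levels $|k-\ell|,\dots,k+\ell$ (with positive extreme coefficients, by Gasper) gives no information about $\widehat{\sigma}$ at level $k+\ell$. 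Gasper's nonnegativity is the right tool for the dual operation --- showing that products of positive definite functions are positive definite, i.e.\ that supports of coefficient sequences add under multiplication of kernels --- not for propagating zeros of $\widehat{\sigma}$. Indeed the vanishing set $\{k:\widehat{\sigma}|_{\mathcal{H}^d_k}=0\}$ is in general not closed under this product structure: already on the circle it can be an arithmetic progression such as $3\mathbb{Z}+1$, which is not closed under addition. So the claim that an infinite $J$ forces $\sigma$ to annihilate cofinitely many levels is unsupported, and with it the whole sufficiency argument collapses (your final step --- a finitely supported measure annihilating cofinitely many levels must vanish --- is fine, but you never reach it).

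The paper's mechanism is quite different and is worth internalizing, because it shows where the hypothesis $\mathbb{M}^d\neq S^d$ genuinely enters. From Lemma \ref{matrix} one has $\sum_\mu c_\mu P_k^{\alpha,\beta}(\cos(|x_\mu x|/2))=0$ for all $x$ and all $k\in J$. Fix $\gamma$, take $x=x_\gamma$, divide by $P_k^{\alpha,\beta}(1)$, and let $k\to\infty$ through $J$. Since $R_k^{\alpha,\beta}(t)\to 0$ for $t\in(-1,1)$, every term with $\cos(|x_\mu x_\gamma|/2)\in(-1,1)$ disappears in the limit; the only surviving terms besides $c_\gamma$ come from points on the antipodal manifold of $x_\gamma$, where the coefficient is $(-1)^kP_k^{\beta,\alpha}(1)/P_k^{\alpha,\beta}(1)$. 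Off the sphere one has $\alpha=(d-2)/2>\beta$, and this ratio tends to $0$, so $c_\gamma=0$ with no parity assumption on $J$; on the sphere $\alpha=\beta$, the ratio is identically $1$, and one needs both parities in $J$ to cancel the antipodal contribution. Your intuition that the sphere's even/odd condition reflects a $\mathbb{Z}_2$-type obstruction is pointing at the right phenomenon, but the obstruction lives in the asymptotics of $P_k^{\alpha,\beta}$ at $t=-1$ (equivalently, in the existence of a nontrivial antipodal manifold), not in the linearization structure of products of harmonics.
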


As mentioned before, in the case in which $\mathbb{M}^d$ is a sphere of dimension at least 2, this theorem was originally proved in \cite{chen}.\ However, the proof presented in that reference comprehended specific arguments with coordinate systems on the sphere.\ An additional contribution of the present paper resides in the fact that the proof of Theorem \ref{main} implies a much simpler proof for the characterization of strict positive definiteness on $S^d$, $d\geq 2$, obtained in \cite{chen}.

An outline of the paper is as follows.\ Section 2 contains the basic material to be used in the proof of the main theorem: a new apparel for  the concept of strict positive definiteness, a key limit property of Jacobi polynomials and isometric embeddings of the spaces $\mathbb{M}^d$ among themselves.\ Section 3 contains the actual proof of Theorem \ref{main} while an application of the theorem to differentiability of positive definite kernels on $\mathbb{M}^d$ is the content of Section 4.\ It complements the main theorem proved in \cite{barbosa}.\ Finally, an alternative proof for the characterization of the strict positive definiteness
of a real, continuous, isotropic and positive definite kernel on $S^1$ appears in the Appendix section.

\section{Basic notation and technical results}

In this section and the others to come, we will assume the upper indices $\alpha$ and $\beta$ belong to the scope considered in the previous section.\ Here, we will include the technical results to be used in
the proof of the main result of the paper.

The spectrum of the Laplace-Beltrami operator $\Delta_d$ on $\mathbb{M}^d$ is discrete, real and non-positive, so that its elements can be arranged in decreasing order, say,
$0=\lambda_0>\lambda_1>\lambda_2>\hdots$.\ If $\mathcal{H}^d_k$ is the eigenspace of $\Delta_d$ corresponding to the eigenvalue $\lambda_k$, it is well known that
the spaces $\mathcal{H}^d_k$ are mutually orthogonal in $L^2(\mathbb{M}^d,\sigma_d)$, in which $\sigma_d$ is the normalized Riemannian measure on $\mathbb{M}^d$.\
We will write $\{S^d_{k,1},S^d_{k,2},\hdots,S^d_{k,\delta(k,d)}\}$ to denote an orthonormal basis of $\mathcal{H}^d_k$ with respect to the inner product of the space above while $\delta(k,d)$ is its dimension.\
A result of Giné (\cite{gine,koo}) justifies the addition formula
\begin{equation*}
\sum_{j=1}^{\delta(k,d)} S_{k,j}^d(x)\overline{S_{k,j}^d(y)} = c_k^{\alpha,\beta} P_k^{\alpha,\beta}\left(\cos{(|xy|/2)}\right),\quad x,y\in \mathbb{M}^d,
\end{equation*}
where
\begin{equation*}
c_{k}^{\alpha,\beta} := \frac{\Gamma(\beta+1)(2k+\alpha+\beta+1)\Gamma(k+\alpha+\beta+1)}{\Gamma(\alpha+\beta+2)\Gamma(k+\beta+1)}.
\end{equation*}
More information on this formula and on the general harmonic analysis on $\mathbb{M}^d$ can be found in the references \cite{bordin,brown,kush1, platonov}.

The following lemma has extreme importance in the analysis of the strict positive definiteness pertaining to this paper.

\begin{lem}\label{matrix} Let $K$ be a nonzero, real, continuous, isotropic and positive definite kernel $K$ on $\mathbb{M}^d$, $x_1, x_2, \ldots, x_n$ distinct points on $\mathbb{M}^d$, and $c_1, c_2, \ldots, c_n$ real scalars.\ Consider the representation (\ref{PDM}) for the isotropic part $K_r^d$ of $K$.\ The following statements are equivalent:
\begin{enumerate}
\item[$(i)$] $\sum_{\mu,\nu=1}^n c_\mu c_\nu K(x_\mu,x_\nu)=0$;
\item[$(ii)$] The equality
\begin{equation*}
\sum_{\mu=1}^n c_\mu P^{\alpha,\beta}_k(\cos{(|x_\mu x|/2)})=0
\end{equation*}
holds for all $x\in \mathbb{M}^d$ and all $k$ in the set $\{k: a_k^{\alpha,\beta}>0\}$.
\end{enumerate}
\end{lem}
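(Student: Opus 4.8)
The plan is to exploit the addition formula together with the spectral (Fourier--Jacobi) expansion of the quadratic form, reducing the scalar equality in $(i)$ to the vanishing of a sum of squares. First I would fix the points $x_1,\dots,x_n$ and scalars $c_1,\dots,c_n$ and introduce, for each $k\in\mathbb{Z}_+$, the function $f_k$ on $\mathbb{M}^d$ defined by $f_k(x)=\sum_{\mu=1}^n c_\mu P_k^{\alpha,\beta}(\cos(|x_\mu x|/2))$. Using the addition formula, $P_k^{\alpha,\beta}(\cos(|x_\mu x_\nu|/2)) = (c_k^{\alpha,\beta})^{-1}\sum_{j=1}^{\delta(k,d)} S_{k,j}^d(x_\mu)\overline{S_{k,j}^d(x_\nu)}$, so that
\begin{equation*}
\sum_{\mu,\nu=1}^n c_\mu c_\nu P_k^{\alpha,\beta}(\cos(|x_\mu x_\nu|/2)) = \frac{1}{c_k^{\alpha,\beta}}\sum_{j=1}^{\delta(k,d)} \left| \sum_{\mu=1}^n c_\mu S_{k,j}^d(x_\mu) \right|^2 \geq 0,
\end{equation*}
since $c_k^{\alpha,\beta}>0$ for all $k$ in the admissible range of indices. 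Plugging the representation \eqref{PDM} for $K_r^d$ into the left-hand side of $(i)$ and interchanging the (finite outer, absolutely convergent inner) sums gives
\begin{equation*}
\sum_{\mu,\nu=1}^n c_\mu c_\nu K(x_\mu,x_\nu) = \sum_{k=0}^{\infty} a_k^{\alpha,\beta}\, \frac{1}{c_k^{\alpha,\beta}}\sum_{j=1}^{\delta(k,d)}\left|\sum_{\mu=1}^n c_\mu S_{k,j}^d(x_\mu)\right|^2.
\end{equation*}
This is a sum of nonnegative terms, so it vanishes if and only if every term with $a_k^{\alpha,\beta}>0$ vanishes, i.e. if and only if $\sum_{\mu=1}^n c_\mu S_{k,j}^d(x_\mu)=0$ for every $k$ with $a_k^{\alpha,\beta}>0$ and every $j\in\{1,\dots,\delta(k,d)\}$.

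It remains to translate the condition ``$\sum_{\mu} c_\mu S_{k,j}^d(x_\mu)=0$ for all $j$'' into the statement $(ii)$ that the function $f_k$ vanishes identically on $\mathbb{M}^d$. For the direction $(i)\Rightarrow(ii)$: if all those coefficients vanish, then for every $x\in\mathbb{M}^d$, applying the addition formula once more,
\begin{equation*}
f_k(x) = \sum_{\mu=1}^n c_\mu P_k^{\alpha,\beta}(\cos(|x_\mu x|/2)) = \frac{1}{c_k^{\alpha,\beta}}\sum_{j=1}^{\delta(k,d)} \overline{S_{k,j}^d(x)}\left(\sum_{\mu=1}^n c_\mu S_{k,j}^d(x_\mu)\right) = 0.
\end{equation*}
Wait --- one must be slightly careful with complex conjugates, but since the $S_{k,j}^d$ can be chosen real (or one simply works with the real and imaginary parts), this causes no difficulty. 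For the converse $(ii)\Rightarrow(i)$: if $f_k\equiv 0$ on $\mathbb{M}^d$ for every $k$ with $a_k^{\alpha,\beta}>0$, then in particular $\sum_{\mu,\nu} c_\mu c_\nu P_k^{\alpha,\beta}(\cos(|x_\mu x_\nu|/2)) = \sum_{\mu} c_\mu f_k(x_\mu) = 0$ for each such $k$, and since the displayed series expansion of $\sum_{\mu,\nu} c_\mu c_\nu K(x_\mu,x_\nu)$ only contains terms indexed by $k$ with $a_k^{\alpha,\beta}>0$, that whole sum is $0$, which is $(i)$.

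The only genuinely delicate point is the interchange of summation and the passage from ``$f_k$ vanishes at the finitely many points $x_\mu$'' to ``$f_k$ vanishes everywhere.'' The interchange is justified because the outer sum over $\mu,\nu$ is finite and the series $\sum_k a_k^{\alpha,\beta} P_k^{\alpha,\beta}(t)$ converges absolutely and uniformly on $[-1,1]$ (a consequence of $\sum_k a_k^{\alpha,\beta} P_k^{\alpha,\beta}(1)<\infty$ and $|P_k^{\alpha,\beta}(t)|\le P_k^{\alpha,\beta}(1)$ for $\alpha\ge\beta\ge -1/2$, which holds throughout the admissible range here). The ``vanishing at $x_\mu$ implies vanishing everywhere'' implication is not needed in that form --- it is the identity $f_k(x)=(c_k^{\alpha,\beta})^{-1}\sum_j \overline{S_{k,j}^d(x)}(\sum_\mu c_\mu S_{k,j}^d(x_\mu))$ that does the work, so the proof hinges entirely on the addition formula being available for each relevant $k$; I expect this bookkeeping with the addition formula, rather than any hard analysis, to be the main thing to get right.
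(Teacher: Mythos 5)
Your proposal is correct and follows essentially the same route as the paper: expand the quadratic form via \eqref{PDM} and the addition formula into $\sum_k (a_k^{\alpha,\beta}/c_k^{\alpha,\beta})\sum_j \bigl|\sum_\mu c_\mu S_{k,j}^d(x_\mu)\bigr|^2$, then read off the vanishing of each square. The only deviation is in the converse direction, where you evaluate $f_k$ at the nodes $x_\nu$ and sum against $c_\nu$ instead of invoking, as the paper does, the linear independence of the basis $\{S_{k,1}^d,\ldots,S_{k,\delta(k,d)}^d\}$ of $\mathcal{H}_k^d$ to recover the vanishing of the individual coefficients $\sum_\mu c_\mu S_{k,j}^d(x_\mu)$; your shortcut is valid and in fact slightly more economical, and your added justification of the interchange of sums (absent from the paper) is a welcome extra.
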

\begin{proof} Let us write $c^tAc$ to denote the quadratic form in $(i)$.\ Introducing \eqref{PDM} and the addition formula in the quadratic form and arranging leads to
\begin{align*}
c^tAc & =  \sum_{\mu,\nu=1}^n c_\mu c_\nu \sum_{k=0}^{\infty}a_k^{\alpha,\beta} P_k^{\alpha,\beta}(\cos{(|x_\mu x_\nu|/2)})\\
 & =  \sum_{\mu,\nu=1}^n c_\mu c_\nu \sum_{k=0}^{\infty}\frac{a_k^{\alpha,\beta}}{c_k^{\alpha,\beta}} \sum_{j=1}^{\delta(k,d)} S_{k,j}^d(x_\mu)\overline{S_{k,j}^d(x_\nu)}\\
& =  \sum_{k=0}^\infty \frac{a_k^{\alpha,\beta}}{c_{k}^{\alpha,\beta}}\sum_{j=1}^{\delta(k,d)} {\left| \sum_{\mu=1}^n c_\mu S_{k,j}^d(x_\mu)\right|}^2.
\end{align*}
Thus, $\sum_{\mu,\nu=1}^n c_\mu c_\nu K(x_\mu,x_\nu)=0$ if and only if
\begin{equation*}
\sum_{\mu=1}^n c_\mu S_{k,j}^d(x_\mu) = 0,\quad j\in \{1,2,\hdots, \delta(k,d)\},\quad k \in\{k: a_k^{\alpha,\beta}>0\}.
\end{equation*}
Multiplying by $\overline{S_{k,j}^d(x)}$ and adding up on $l$, we obtain that
\begin{equation*}
\sum_{\mu=1}^n c_\mu \sum_{j=1}^{\delta(k,d)}S_{k,j}^d(x_\mu)\overline{S_{k,j}^d(x)} = 0,\quad x\in \mathbb{M}^d, \quad k \in \{k:  a_k^{\alpha,\beta}>0\}.
\end{equation*}
Another application of the addition formula leads to the statement in $(ii)$.\\
Conversely, if $(ii)$ holds, then
\begin{equation*}
\sum_{\mu=1}^n c_\mu \sum_{j=1}^{\delta(k,d)} S_{k,j}^d(x_\mu)\overline{S_{k,j}^d(x)}=0,\quad x\in \mathbb{M}^d, \quad k \in\{k: a_k^{\alpha,\beta}>0\},
\end{equation*}
that is,
\begin{equation*}
\sum_{j=1}^{\delta(k,d)}\left[ \sum_{\mu=1}^n c_\mu \overline{S_{k,j}^d(x_\mu)}\right] S_{k,j}^d(x) =0,\quad x\in \mathbb{M}^d,\quad k \in\{k: a_k^{\alpha,\beta}>0\}.
\end{equation*}
Since $\{S^d_{k,1},S^d_{k,2},\hdots,S^d_{k,\delta(k,d)}\}$ is a basis of $\mathcal{H}_k^d$,
\begin{equation*}
\sum_{\mu=1}^n c_\mu S_{k,j}^d(x_\mu) = 0, \quad j\in \{1,2,\hdots, \delta(k,d)\},\quad k \in\{k:  a_k^{\alpha,\beta}>0\}.
\end{equation*}
Now, the computations made in the first half of the proof imply that the equality $c^tAc=0$ holds.
\end{proof}

In some points of the paper, normalized Jacobi polynomials will be more suitable.\ We will write
\begin{equation*}
R_k^{\alpha,\beta}:=\frac{P_k^{\alpha,\beta}}{P_k^{\alpha,\beta}(1)}, \quad k\in \mathbb{Z}_+,
\end{equation*}
in which
\begin{equation*}
P_k^{\alpha,\beta}(1)=\left( \begin{array}{c} k+\alpha\\ k \end{array}\right):= \frac{\Gamma(k+\alpha+1)}{k!\Gamma(\alpha+1)}, \quad k \in \mathbb{Z}_+,\quad \alpha,\beta >-1.
\end{equation*}
As usual, $\Gamma$ denotes de Gamma function.

Below, we collect a few properties of the Jacobi polynomials.

\begin{lem}\label{limit} The Jacobi polynomials have the following properties:\\
$(i)$ $P_k^{\alpha,\beta}(-t)=(-1)^k P_k^{\beta,\alpha}(t)$, $t \in [-1,1]$;\\
$(ii)$ $\lim_{k\to \infty} R_{k}^{\alpha,\beta}(t)=0$, $t \in (-1,1)$;\\
$(iii)$ If $\alpha>\beta$, then $\lim_{k\to \infty} P_k^{\beta,\alpha}(1)[P_k^{\alpha,\beta}(1)]^{-1}=0$.
\end{lem}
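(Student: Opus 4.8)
\medskip
\noindent\textbf{Proof proposal.} The plan is to treat the three items separately; each is a classical fact about Jacobi polynomials that I would extract from standard formulas (see \cite{szego}) together with the evaluation $P_k^{\alpha,\beta}(1)=\binom{k+\alpha}{k}=\Gamma(k+\alpha+1)/(k!\,\Gamma(\alpha+1))$ recalled just above the lemma, and the companion identity $P_k^{\alpha,\beta}(-1)=(-1)^k\binom{k+\beta}{k}$.

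For $(i)$, I would use that $P_k^{\alpha,\beta}$ is, up to a nonzero multiplicative constant, the only polynomial of degree $k$ orthogonal on $[-1,1]$ with respect to the weight $(1-t)^\alpha(1+t)^\beta$. The substitution $t\mapsto -t$ interchanges the two factors of the weight, so $t\mapsto P_k^{\alpha,\beta}(-t)$ must be a scalar multiple of $P_k^{\beta,\alpha}(t)$; evaluating at $t=1$ and comparing $P_k^{\alpha,\beta}(-1)=(-1)^k\binom{k+\beta}{k}$ with $P_k^{\beta,\alpha}(1)=\binom{k+\beta}{k}$ fixes the constant at $(-1)^k$. Alternatively one simply quotes this identity from \cite{szego}.

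For $(iii)$, I would write the ratio in closed form from the formula for $P_k^{\cdot,\cdot}(1)$,
\[
\frac{P_k^{\beta,\alpha}(1)}{P_k^{\alpha,\beta}(1)}=\frac{\Gamma(\alpha+1)}{\Gamma(\beta+1)}\cdot\frac{\Gamma(k+\beta+1)}{\Gamma(k+\alpha+1)},
\]
and then invoke the elementary asymptotics $\Gamma(k+a)/\Gamma(k+b)\sim k^{a-b}$ as $k\to\infty$ (a consequence of Stirling's formula). Since $\alpha>\beta$ forces $\beta+1<\alpha+1$, the right-hand side tends to $0$.

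For $(ii)$, the decisive input is the classical estimate $P_k^{\alpha,\beta}(\cos\theta)=O(k^{-1/2})$ as $k\to\infty$, valid for each fixed $\theta\in(0,\pi)$; this follows from the Darboux-type asymptotic formula for Jacobi polynomials (or from the standard bound on compact subsets of the open interval) in \cite{szego}. For the exponents relevant here one has $\alpha=(d-2)/2\ge 0$, hence the denominator $P_k^{\alpha,\beta}(1)=\binom{k+\alpha}{k}\ge 1$; consequently $R_k^{\alpha,\beta}(t)=P_k^{\alpha,\beta}(t)/P_k^{\alpha,\beta}(1)=O(k^{-1/2})\to 0$ for every $t\in(-1,1)$. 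I expect $(ii)$ to be the only item needing any care: one must cite the correct classical asymptotics and note that the argument uses $\alpha\ge 0$ (equivalently $d\ge 2$), which is precisely the standing hypothesis here. Beyond this bookkeeping the lemma presents no genuine obstacle.
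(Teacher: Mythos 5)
Your proposal is correct and follows essentially the same route as the paper: item $(i)$ is the classical reflection identity quoted from \cite{szego}, item $(iii)$ is the same Gamma-quotient computation combined with $\Gamma(k+a)/\Gamma(k+b)\sim k^{a-b}$, and for item $(ii)$ the Darboux-type $O(k^{-1/2})$ asymptotics from \cite[p.196]{szego} (together with the observation that $\alpha\geq 0$ keeps $P_k^{\alpha,\beta}(1)\geq 1$) is precisely the paper's primary justification. The only difference is cosmetic: the paper additionally sketches an alternative derivation of $(ii)$ from the recurrence $(1-t)R_k^{\alpha,\beta}(t)=\frac{2\alpha}{2k+\alpha+\beta+1}\bigl[R_k^{\alpha-1,\beta}(t)-R_{k+1}^{\alpha-1,\beta}(t)\bigr]$, which you do not need.
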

\begin{proof} Property $(i)$ is a classical result in the theory of orthogonal polynomials (\cite[p.59]{szego}).\ Property $(ii)$ follows from a formula derived in \cite[p.196]{szego}.\ However, for the values of $\alpha$ and $\beta$ pertinent to this paper, except for the case in which $\alpha=0=\beta+1/2$, it can also be accomplished via the recurrence formula (\cite[p.71]{szego})
\begin{equation*}
(1-t)R_k^{\alpha,\beta}(t) = \frac{2\alpha}{2k+\alpha+\beta+1}\left[R_k^{\alpha-1,\beta}(t)- R_{k+1}^{\alpha-1,\beta}(t)\right], \quad k\in \mathbb{Z}_+,\quad t\in (-1,1),
\end{equation*}
by taking a limit as $k \to \infty$.\ The limit formula in $(iii)$ follows from the identity
\begin{equation*}
\frac{P_k^{\beta,\alpha}(1)}{P_k^{\alpha,\beta}(1)} = \frac{\Gamma(\alpha+1)}{\Gamma(\beta+1)}\frac{\Gamma(k+\beta+1)}{\Gamma(k+\alpha+1)}, \quad k\in \mathbb{Z}_+,
\end{equation*}
and an application of the following well known limit formula
\begin{equation*}
\lim_{n\to \infty} \frac{\Gamma(n+x)}{\Gamma(n) n^{x}}=1, \quad x \in \mathbb{R}
\end{equation*}
for the Gamma function.
\end{proof}

The last result in this section refers to isometric embeddings in $\mathbb{M}^d$.\ If $(M_1,d_1)$ and $(M_2,d_2)$ are metric spaces, an {\em isometric embedding} from $M_1$ into $M_2$ is a function
$\phi : M_1 \to M_2$ for which
\begin{equation*}
d_2(\phi(x),\phi(y))=d_1(x,y),\quad x,y \in M_1.
\end{equation*}

If we write $M_1 \hookrightarrow M_2$ to indicate the existence of an isometric embedding from $M_1$ to $M_2$, the following result holds (see \cite[p.66]{askey} and references therein).\ It is worth to mention that it takes into account the distance normalization we have adopted for the
 metric spaces $\mathbb{M}^d$.

\begin{lem}\label{embed} There exists a chain of isometric embeddings as follows
\begin{equation*}
S^1\hookrightarrow \mathbb{P}^{2}(\mathbb{R}) \hookrightarrow \mathbb{P}^{d}(\mathbb{R})\hookrightarrow \mathbb{P}^{2d}(\mathbb{C})\hookrightarrow \mathbb{P}^{4d}(\mathbb{H})\hookrightarrow \mathbb{P}^{8d}(Cay), \quad d=2,3,\hdots.
\end{equation*}
\end{lem}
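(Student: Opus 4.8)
I would prove Lemma~\ref{embed} by exhibiting each space in the chain as a concrete model and producing the arrows from the obvious inclusions of scalar fields and of dimensions, the only real work being to check that the normalizations are transported consistently along the chain.

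First I would realize $\mathbb{P}^{d}(\mathbb{R})$, $\mathbb{P}^{d}(\mathbb{C})$ and $\mathbb{P}^{d}(\mathbb{H})$ as the sets of one--dimensional $\mathbb{F}$--subspaces (lines) of $\mathbb{F}^{m+1}$, where $\mathbb{F}\in\{\mathbb{R},\mathbb{C},\mathbb{H}\}$ and $m$ is the $\mathbb{F}$--dimension (so the real dimension is $m$, $2m$, $4m$, respectively), and record the classical fact that, for unit representatives $u,v$ of two lines, the Riemannian distance between $[u]$ and $[v]$ depends only on the scalar $|\langle u,v\rangle_{\mathbb{F}}|\in[0,1]$, and that in the normalization for which all geodesics have length $2\pi$ it is given by $\cos(|[u][v]|/2)=|\langle u,v\rangle_{\mathbb{F}}|$. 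Under this normalization $S^{1}$ is exactly $\mathbb{P}^{1}(\mathbb{R})$, which accounts for the first arrow, and the iterated linear inclusions $\mathbb{R}^{2}\subset\mathbb{R}^{3}\subset\cdots\subset\mathbb{R}^{d+1}$ give $S^{1}\hookrightarrow\mathbb{P}^{2}(\mathbb{R})\hookrightarrow\mathbb{P}^{d}(\mathbb{R})$. Applying the field inclusions $\mathbb{R}\subset\mathbb{C}\subset\mathbb{H}$ coordinatewise to $\mathbb{F}^{d+1}$ yields $\mathbb{P}^{d}(\mathbb{R})\hookrightarrow\mathbb{P}^{2d}(\mathbb{C})\hookrightarrow\mathbb{P}^{4d}(\mathbb{H})$: the induced map on lines is injective, because an $\mathbb{F}'$--multiple of a vector of $\mathbb{F}^{d+1}$ lies in $\mathbb{F}^{d+1}$ only when the scalar already lies in $\mathbb{F}$, and it preserves $|\langle u,v\rangle|$ since the smaller Hermitian form is the restriction of the larger one and the modulus is unchanged; hence it preserves distance. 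Finally I would note that each of these submanifolds is totally geodesic in the next — it is the fixed--point set of the isometry induced by coordinatewise conjugation of the larger field, resp.\ of a reflection — so it inherits the ambient metric, and the ambient normalization restricts to precisely the normalization that makes its own geodesics have length $2\pi$; this last point is the routine curvature/geodesic--length bookkeeping, and it is exactly the normalization fixed in \cite[p.~66]{askey}.

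The only link needing genuine care is $\mathbb{P}^{4d}(\mathbb{H})\hookrightarrow\mathbb{P}^{8d}(Cay)$, which is meaningful only for $d=2$ (the Cayley plane is $16$--dimensional) and then reads $\mathbb{HP}^{2}\hookrightarrow\mathbb{OP}^{2}$. Here the naive ``lines in $\mathbb{O}^{3}$'' description breaks down because $\mathbb{O}$ is not associative, so I would use the Jordan--algebra model: $\mathbb{OP}^{2}$ is the set of trace--one rank--one idempotents of the exceptional Jordan algebra $\mathfrak{h}_{3}(\mathbb{O})$ of octonion--Hermitian $3\times3$ matrices, with $[x]$ corresponding to $xx^{*}$ for a unit $x\in\mathbb{O}^{3}$, and the distance is still controlled by $|\langle x,y\rangle_{\mathbb{O}}|$. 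The associative subalgebra $\mathbb{H}\subset\mathbb{O}$ gives $\mathfrak{h}_{3}(\mathbb{H})\subset\mathfrak{h}_{3}(\mathbb{O})$, hence an embedding of $\mathbb{HP}^{2}$; it is the fixed--point set of the isometry of $\mathbb{OP}^{2}$ coming from an automorphism of $\mathbb{O}$ that fixes $\mathbb{H}$ pointwise, so it is totally geodesic (and metrically compatible), and for $x,y\in\mathbb{H}^{3}$ the quantity $|\langle x,y\rangle|$ is computed entirely inside the associative $\mathbb{H}$, so it matches the $\mathbb{HP}^{2}$ distance. Alternatively — and this is the quickest route for the whole lemma — one may simply invoke the classification of totally geodesic submanifolds of the compact two--point homogeneous spaces together with the fact that such a submanifold is again a two--point homogeneous space carrying the induced metric, the chain above then being a selection of these inclusions in the normalization of \cite{askey}. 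I expect the main obstacle, if the lemma is proved from scratch, to be purely the normalization bookkeeping: keeping the several Fubini--Study--type metrics scaled so that ``geodesics have length $2\pi$'' is preserved along every arrow; the non-associativity of $\mathbb{O}$ is only a minor additional nuisance, sidestepped by working in the Jordan algebra and observing that only the associative subfield $\mathbb{H}$ ever enters the distance computations.
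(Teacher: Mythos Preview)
The paper does not actually prove this lemma: it is stated immediately after the sentence ``the following result holds (see \cite[p.~66]{askey} and references therein)'' and is left without proof, the only additional remark being that the cited result is compatible with the geodesic-length normalization adopted here. So there is nothing in the paper to compare your argument against; you have supplied a genuine proof where the authors chose to quote one.

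Your outline is sound and is essentially the standard way one would justify the quoted chain: realize each projective space via lines in $\mathbb{F}^{m+1}$, use that the Fubini--Study distance is a fixed monotone function of $|\langle u,v\rangle_{\mathbb{F}}|$, and then observe that the inclusions $\mathbb{R}\subset\mathbb{C}\subset\mathbb{H}$ (and the dimension inclusions) preserve this quantity, with the Cayley link handled via $\mathfrak{h}_3(\mathbb{O})$. Your remark that the last arrow is only meaningful for $d=2$ is correct and worth keeping; the paper's ``$d=2,3,\ldots$'' in the displayed chain should be read as applying to the earlier arrows. The one place to be slightly careful in writing this up is exactly the point you flag: checking that the ``all closed geodesics have length $2\pi$'' normalization is inherited at each step, which follows once you note each image is totally geodesic (fixed-point set of an isometric involution) and that the diameter of each space in its own normalization matches the diameter of its image in the ambient normalization. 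None of this is in the paper, so if you include it you are adding content rather than reproducing the authors' argument.
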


An obvious consequence of the previous lemma is that $S^1$ can be isometrically embedded in all the $\mathbb{M}^d$, $d\geq 2$.

\section{Strict positive definiteness}

The results in this section will converge to a characterization for the real, continuous, isotropic and strictly positive definite kernels on $\mathbb{M}^d$.\ Our proofs restricted to the cases in which the The proofs also include the case in which $\mathbb{M}^d$ is a sphere of dimension at least 2 are somehow more elegant than that presented in \cite{chen}.\ However, the proofs in this section do not apply to case in which the space is a circle.

We begin with a necessary condition.\ Here, it is convenient to prove the theorem for all the homogeneous spaces considered in the paper.

\begin{thm} \label{nec} Let $K$ be a nonzero, real, continuous, isotropic and positive definite kernel on $\mathbb{M}^d$.\ In order that it be strictly positive definite it is necessary that in the representation (\ref{PDM}) for $K_r^d$, $a_k^{\alpha,\beta}>0$ for infinitely many integers $k$.\ If $\alpha=\beta$, then it is also necessary that $a_k^{\alpha,\beta}>0$ for infinitely many even and infinitely many odd $k$.
\end{thm}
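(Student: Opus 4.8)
The plan is to argue by contraposition in both parts, using Lemma \ref{matrix} to translate a failure of the density condition into a genuine set of points and scalars witnessing the failure of strict positive definiteness. For the first assertion, suppose the set $N:=\{k\in\mathbb{Z}_+: a_k^{\alpha,\beta}>0\}$ is finite. Then by Lemma \ref{matrix}, a choice of distinct points $x_1,\dots,x_n$ and scalars $c_1,\dots,c_n$ (not all zero) makes the quadratic form in \eqref{pd} vanish precisely when the finitely many linear conditions $\sum_{\mu=1}^n c_\mu P_k^{\alpha,\beta}(\cos(|x_\mu x|/2))=0$ hold for all $x\in\mathbb{M}^d$ and all $k\in N$. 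The key point is a counting/dimension argument: the functions $x\mapsto P_k^{\alpha,\beta}(\cos(|x_\mu x|/2))$, as $k$ ranges over the finite set $N$, span (within the span of the relevant spherical harmonics $\mathcal{H}_k^d$) a finite-dimensional space, so that the conditions are a finite system of linear equations on the vector $(c_1,\dots,c_n)$; once $n$ is chosen larger than the total dimension $\sum_{k\in N}\delta(k,d)$, a nonzero solution exists. One must also ensure the $x_\mu$ can be taken distinct; this is arranged by picking them in general position so that the evaluation functionals are distinct, which is possible because the $S_{k,j}^d$ separate points. This yields a nonzero $(c_\mu)$ with $c^tAc=0$, contradicting strict positive definiteness.

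For the second assertion, assume $\alpha=\beta$ and suppose $a_k^{\alpha,\alpha}>0$ for only finitely many even $k$ (the odd case is symmetric). Here I would exploit the symmetry relation in Lemma \ref{limit}$(i)$: when $\alpha=\beta$, $P_k^{\alpha,\alpha}(-t)=(-1)^kP_k^{\alpha,\alpha}(t)$, so the even-degree Jacobi polynomials are even functions of $t$ and the odd-degree ones are odd. The natural construction is to take a pair of antipodal points $x$ and $x^*$ (points with $|xx^*|$ equal to the diameter $\pi$, so that $\cos(|x x^*|/2)=0$, or more to the point, with $\cos(|x^* y|/2)=-\cos(|xy|/2)$ for all $y$). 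Taking $n=2$, $x_1=x$, $x_2=x^*$, $c_1=1$, $c_2=-1$, one checks via Lemma \ref{matrix} that $c^tAc=0$ is equivalent to $P_k^{\alpha,\alpha}(\cos(|xy|/2)) - P_k^{\alpha,\alpha}(\cos(|x^*y|/2))=0$ for all $y$ and all $k\in N$; by the parity relation this difference is $P_k^{\alpha,\alpha}(t)-(-1)^kP_k^{\alpha,\alpha}(t)$, which vanishes identically for even $k$ but equals $2P_k^{\alpha,\alpha}(t)$ for odd $k$. Hence this particular pair fails to witness non-strictness. Instead, the right move is to assume finitely many $k\in N$ of a given parity and build a separate finite linear system only over the offending parity class, combined with the antipodal trick to kill the other parity class automatically — i.e. search for $(c_\mu)$ supported symmetrically under $x\mapsto x^*$ so that the odd-degree (respectively even-degree) conditions are satisfied for free, leaving only finitely many even-degree (respectively odd-degree) conditions, which again admit a nonzero solution once $n$ is large enough.

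The main obstacle I anticipate is the bookkeeping in this second part: one must choose the points and the sign pattern so that, simultaneously, (a) the antipodal symmetry forces the conditions coming from the parity class with infinitely many positive coefficients to hold automatically, and (b) enough free parameters remain to solve the finite residual system coming from the other parity class, all while keeping the points distinct. Concretely, I would take points $y_1,\dots,y_m$ together with their antipodes $y_1^*,\dots,y_m^*$, set $c$ on $y_i^*$ to be $-c$ on $y_i$ (an odd pattern) when the even class is finite, so that the odd-$k$ conditions are instantly satisfied and the even-$k$ conditions reduce to a homogeneous linear system in $m$ unknowns with finitely many equations; choosing $m$ large then produces the desired nonzero vanishing configuration, contradicting strict positive definiteness and completing the contrapositive.
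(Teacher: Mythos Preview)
Your strategy is correct and will work, but it is genuinely different from the paper's. The paper does \emph{not} use Lemma~\ref{matrix} or dimension counting through $\sum_{k\in N}\delta(k,d)$; instead it invokes the isometric embedding $\phi:S^1\hookrightarrow\mathbb{M}^d$ from Lemma~\ref{embed}, places all the points $x_\mu$ on the embedded circle, and bounds the rank of $[K(x_\mu,x_\nu)]$ by writing each $P_k^{\alpha,\beta}(\phi^{-1}(x_\mu)\cdot\phi^{-1}(x_\nu))$ as a polynomial in a Gram matrix of vectors in $\mathbb{R}^2$, whose $k$-th Hadamard power has rank at most $2^k$. For the $\alpha=\beta$ case the paper again works on the embedded circle, pairing $y_\mu$ with the point whose $\phi^{-1}$-image is the negative, and estimates ranks of two pieces of the matrix. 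Your intrinsic approach via the addition formula is cleaner and gives the sharper bound $n>\sum_{k\in N}\delta(k,d)$ rather than $n\ge 2^{N+1}$; the paper's route has the advantage of being uniform across all $\mathbb{M}^d$ without needing to know $\delta(k,d)$ and of avoiding any genericity argument for the choice of the $x_\mu$.

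One genuine slip to fix: your parity bookkeeping is inverted. With the antipodal relation $\cos(|y^*x|/2)=-\cos(|yx|/2)$ and $P_k^{\alpha,\alpha}(-t)=(-1)^kP_k^{\alpha,\alpha}(t)$, the \emph{odd} sign pattern $c_{y_i^*}=-c_{y_i}$ annihilates the \emph{even}-$k$ conditions (as your own $n=2$ computation already shows), not the odd ones. So when the even class is finite and the odd class is infinite, you must take the \emph{even} pattern $c_{y_i^*}=c_{y_i}$ to kill the infinitely many odd-$k$ equations automatically, leaving a finite homogeneous system in the $m$ free parameters $c_{y_i}$; then $m$ larger than $\sum_{k\in N,\ k\text{ even}}\delta(k,d)$ yields the desired nonzero solution. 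With that correction your sketch goes through.
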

\begin{proof} In the first half of the proof we will show that if $\{k: a_{k}^{\alpha,\beta}>0\}$ is finite, then $K$ is not strictly positive definite on $\mathbb{M}^d$.\ Let $\phi :S^1 \to \mathbb{M}^d$ be an isometric embedding, as guaranteed by Lemma \ref{embed}.\ This embedding allows the selection of $n$ distinct points $x_1, x_2, \ldots, x_{n}$ in $\mathbb{M}^d$ so that
\begin{equation*}
|x_\mu x_\nu|=|\phi^{-1}(x_\mu)\, \phi^{-1}(x_\nu)|, \quad \mu,\nu=1,2,\ldots,n.
\end{equation*}
We now look at the $n \times n$ matrix with $\mu\nu$-entry
\begin{equation*}
K(x_\mu,x_\nu)=K_r^d(\cos( |\phi^{-1}(x_\mu)\, \phi^{-1}(x_\nu)|/2))=\sum_{k=0}^N a_{k}^{\alpha,\beta}P_k^{\alpha,\beta}(\phi^{-1}(x_\mu)\cdot  \phi^{-1}(x_\nu)).
\end{equation*}
in which $N:=\max\{k: a_{k}^{\alpha,\beta}>0\}$ and $\cdot $ is the usual inner product of $\mathbb{R}^2$.\ Since $\{x_1,x_2, \ldots, x_n\}$ is a subset of $\mathbb{R}^2$, the powered Gram matrix with entries $(\phi^{-1}(x_\mu)\cdot  \phi^{-1}(x_\nu))^k$ has rank
at most $2^k$.\ In particular the matrix $[K(x_\mu,x_\nu)]$ has rank at most $\sum_{k=0}^N 2^k= 2^{N+1}-1,$ a number that does not depend upon $n$.\ In particular, if $n\geq 2^{N+1}$, the matrix $[K(x_\mu,x_\nu)]$ cannot be of full rank.\ This takes care of the first assertion.\ Next, we assume $\alpha=\beta$.\ We will show that if $\{k: a_{2k}^{\alpha,\alpha}>0\}$ is finite, then $K$ is not strictly positive definite on $\mathbb{M}^d$.\ The other half of the proof, under the assumption that $\{k: a_{2k+1}^{\alpha,\alpha}>0\}$ is finite, is similar and will not be sketched.\ If $\{k: a_{2k}^{\alpha,\alpha}>0\}$ is finite, then we can write
\begin{equation*}
K_r^d(t)=\sum_{k=0}^{2N'} a_{k}^{\alpha,\alpha} P_{k}^{\alpha,\alpha}(t) + \sum_{k\geq 2N'+1} a_{k}^{\alpha,\alpha} P_{k}^{\alpha,\alpha}(t), \quad t\in [-1,1],
\end{equation*}
in which $N'=\max\{k: a_{2k}^{\alpha,\alpha}>0\}$.\ Now, we select $2n$ distinct points $y_1, y_2, \ldots, y_{2n}$ in $\mathbb{M}^d$ so that
\begin{equation*}
\phi^{-1}(y_\mu)=-\phi^{-1}(y_{n+\mu}),\quad \mu=1,2,\hdots,n,
\end{equation*}
\begin{equation*}
|y_\mu y_\nu|=|\phi^{-1}(y_\mu)\, \phi^{-1}(y_\nu)|, \quad \mu,\nu=1,2,\ldots,2n,
\end{equation*}
and define $2n \times 2n$ matrices $A$ and $B$ with entries given by
\begin{equation*}
B_{\mu\nu} = \sum_{k=0}^{2N'} a_{k}^{\alpha,\alpha} P_{k}^{\alpha,\alpha}(\cos{(|x_\mu x_\nu|/2})) \quad A_{\mu\nu} = K_r^d(\cos{(|x_\mu x_\nu|/2)}).
\end{equation*}
If for $\mu \in \{1,2,\ldots,n\}$, $c_\mu$ is the vector having its $\mu$-th and $(n + \mu)$-th components equal to $1$ and all the others equal to $0$, it is promptly seen that $c_\mu$ belongs to the kernel of $A-B$.\ In particular, the rank of $A-B$ is at most $n$.\ Taking into account the first half of the proof, it is now clear that the rank of $A=[K(x_\mu, x_\nu)]$ does not exceed $2^{2N'+1}-1+n$.\ Therefore, if $n>2^{2N'+1}-1$, the matrix $A$ cannot be of full rank.
\end{proof}

Next, we will prepare the terrain for the proof of the sufficiency part of the conditions presented in the previous theorem.\ For a fixed $x \in \mathbb{M}^d$, the {\em antipodal manifold} of $x$ is the set
\begin{equation*}
\Gamma_x:=\{y \in \mathbb{M}^d: |xy|=2\pi\}.
\end{equation*}
The following lemma was originally proved by E. Cartan (\cite{cartan}) and T. Nagano (\cite{nagano}).\ But, it is also quoted and re-obtained in \cite{helgason,tirao}.\ It describes what the antipodal manifold of a point in $\mathbb{M}^d$ is.

\begin{lem} Let $x$ be a fixed point in $\mathbb{M}^d$.\ The antipodal manifold $\Gamma_x$ of $x$ is a point if $\mathbb{M}^d=S^d$ and is isometrically isomorphic to $\mathbb{P}^{d-1}(\mathbb{R})$, $\mathbb{P}^{d-2}(\mathbb{C})$, $\mathbb{P}^{d-4}(\mathbb{H})$, and $S^8$ in the cases $\mathbb{M}^d$ is respectively, $\mathbb{P}^{d}(\mathbb{R})$, $\mathbb{P}^{d}(\mathbb{C})$, $\mathbb{P}^{d}(\mathbb{H})$, and $\mathbb{P}^{d}(Cay)$.
\end{lem}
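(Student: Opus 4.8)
\emph{Approach.} Since $\mathbb{M}^d$ is two-point homogeneous, $G_d$ acts transitively, so it suffices to determine $\Gamma_x$ for one conveniently chosen point $x$. On each of these spaces the distance function $y\mapsto |xy|$ has no critical points other than $x$ itself and the points realizing the diameter, and these last points form the cut locus of $x$, which is itself a totally geodesic submanifold (indeed again a compact two-point homogeneous space). Hence $\Gamma_x$ is precisely this cut locus; and because it is totally geodesic, its geodesics are geodesics of $\mathbb{M}^d$ and still have length $2\pi$, which is what makes the identifications below isometric for the normalization adopted in the paper. The plan is to run through the five families in their standard models and read the cut locus off directly.

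For $\mathbb{M}^d=S^d$, the geodesics through $x$ are great circles, any two non-antipodal points lie on a unique minimizing geodesic, and every geodesic issuing from $x$ refocuses at $-x$; thus $\Gamma_x=\{-x\}$. For $\mathbb{M}^d=\mathbb{P}^d(\mathbb{R})=S^d/\{\pm 1\}$ I would represent points as pairs $\{p,-p\}$ with $p\in S^d$; up to the fixed scale, the distance between $\{p,-p\}$ and $\{q,-q\}$ equals $\min\{\arccos(p\cdot q),\ \pi-\arccos(p\cdot q)\}$, which is maximal exactly when $p\cdot q=0$. Therefore $\Gamma_{\{p,-p\}}=\{\{q,-q\}:q\in S^d,\ q\perp p\}$, the projectivization of the hyperplane $p^{\perp}$, that is $\mathbb{P}^{d-1}(\mathbb{R})$. (As a sanity check, for $d=2$ this yields $\mathbb{P}^1(\mathbb{R})\cong S^1$, consistent with Lemma \ref{embed}.)

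For $\mathbb{M}^d=\mathbb{P}^d(\mathbb{F})$ with $\mathbb{F}\in\{\mathbb{C},\mathbb{H}\}$ and $n=d/\dim_{\mathbb{R}}\mathbb{F}$, I would use the model consisting of the $\mathbb{F}$-lines in $\mathbb{F}^{n+1}$ with the Fubini--Study metric: the distance between $[z]$ and $[w]$ is a strictly increasing function of the angle $\arccos|\langle z,w\rangle_{\mathbb{F}}|$, hence maximal precisely when $\langle z,w\rangle_{\mathbb{F}}=0$. Thus $\Gamma_{[z]}$ is the projectivization of the $\mathbb{F}$-hyperplane $z^{\perp}\cong\mathbb{F}^{n}$, namely $\mathbb{P}^{d-2}(\mathbb{C})$ when $\mathbb{F}=\mathbb{C}$ and $\mathbb{P}^{d-4}(\mathbb{H})$ when $\mathbb{F}=\mathbb{H}$.

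The only genuinely delicate case is the Cayley plane $\mathbb{P}^{16}(Cay)=F_4/\mathrm{Spin}(9)$, where the nonassociativity of the octonions rules out the literal ``orthogonal hyperplane'' argument used above. Here I would invoke the classical analysis of Cartan and Nagano: the isotropy group $\mathrm{Spin}(9)$ acts on the tangent space $\mathbb{R}^{16}$ at the base point, and the cut locus of that point is a single $\mathrm{Spin}(9)$-orbit, which turns out to be the octonionic projective line $\mathbb{OP}^1$, a totally geodesic round $8$-sphere. Assembling these descriptions — and citing \cite{cartan,nagano,helgason,tirao} for the verifications, in particular for the Cayley plane — finishes the proof. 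I expect the main obstacle to be exactly this last case: the sphere and the three projective families reduce to short explicit computations, whereas the Cayley plane genuinely requires the structure theory of $F_4/\mathrm{Spin}(9)$, so in practice the result is quoted rather than re-derived from scratch.
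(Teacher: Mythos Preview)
Your sketch is correct in each case, and the identifications you reach match the lemma precisely. The paper, however, does not prove this lemma at all: it states the result and attributes it to Cartan and Nagano, with further references to Helgason and Tirao. So there is no ``paper's own proof'' to compare against --- the authors simply quote the classical structure theory.

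What you have done is more than the paper attempts: you give explicit models for $S^d$, $\mathbb{P}^d(\mathbb{R})$, $\mathbb{P}^d(\mathbb{C})$, and $\mathbb{P}^d(\mathbb{H})$ and read off the cut locus directly, only falling back on the cited literature for the Cayley plane. That is a reasonable and honest division of labor, and your remarks about totally geodesic embedding and the preservation of the $2\pi$ geodesic length are exactly what is needed to ensure the identification is isometric in the normalization the paper uses. In short, your proposal is a genuine proof outline where the paper offers only a citation; both arrive at the same statement, and your version is strictly more informative.
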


In particular, the previous lemma reveals that if $\mathbb{M}^d$ is not a sphere, then for a fixed point $x$ in $\mathbb{M}^d$, there are infinitely many $y$ in $\mathbb{M}^d$ for which $\cos{(|x y|/2)} =-1$.\ This will have significance in the arguments in the proof of the next theorem.

\begin{thm} Let $K$ be a real, continuous, isotropic and positive definite kernel on $\mathbb{M}^d$, $d\geq 2$.\ In order that it be strictly positive definite it is sufficient that in the representation (\ref{PDM}) for $K_r^d$, $a_k^{\alpha,\beta}>0$ for infinitely many odd and infinitely many even integers $k$.\ If $\alpha >\beta$, the condition can be weakened to $a_k^{\alpha,\beta}>0$ for infinitely many integers $k$.
\end{thm}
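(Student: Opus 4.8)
The plan is to establish strict positive definiteness directly. Suppose $\sum_{\mu,\nu=1}^n c_\mu c_\nu K(x_\mu,x_\nu)=0$ for some distinct points $x_1,\dots,x_n\in\mathbb{M}^d$ and real scalars $c_1,\dots,c_n$; the goal is to deduce $c_1=\dots=c_n=0$. The entry point is Lemma~\ref{matrix} (applicable because the coefficient hypothesis forces $a_k^{\alpha,\beta}>0$ for some $k$, hence $K\neq 0$): the vanishing of that quadratic form is equivalent to
\[
\sum_{\mu=1}^n c_\mu R_k^{\alpha,\beta}\!\left(\cos(|x_\mu x|/2)\right)=0,\qquad x\in\mathbb{M}^d,\quad k\in\{k:a_k^{\alpha,\beta}>0\},
\]
after dividing through by the positive quantity $P_k^{\alpha,\beta}(1)$. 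Only the instances $x=x_\ell$, $\ell\in\{1,\dots,n\}$, will be needed.

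Fix $\ell$ and put $x=x_\ell$. Since $R_k^{\alpha,\beta}(1)=1$, isolating the $\mu=\ell$ term gives
\[
c_\ell=-\sum_{\mu\neq\ell}c_\mu R_k^{\alpha,\beta}(t_\mu),\qquad t_\mu:=\cos(|x_\mu x_\ell|/2)\in[-1,1),
\]
valid for every $k$ with $a_k^{\alpha,\beta}>0$. Here $t_\mu=-1$ exactly when $x_\mu$ lies in the antipodal manifold $\Gamma_{x_\ell}$, which is a single point when $\mathbb{M}^d=S^d$ and infinite otherwise. Split the sum according to whether $t_\mu\in(-1,1)$ or $t_\mu=-1$. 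By Lemma~\ref{limit}$(ii)$, $R_k^{\alpha,\beta}(t_\mu)\to 0$ as $k\to\infty$ for each $\mu$ in the first group, so that finite block tends to $0$ along any sequence of admissible $k$'s. Writing $J=\{\mu\neq\ell:x_\mu\in\Gamma_{x_\ell}\}$, the remaining block equals $R_k^{\alpha,\beta}(-1)\sum_{\mu\in J}c_\mu$, and Lemma~\ref{limit}$(i)$ gives $R_k^{\alpha,\beta}(-1)=(-1)^k\,P_k^{\beta,\alpha}(1)/P_k^{\alpha,\beta}(1)$.

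Now distinguish the two cases of the statement. If $\alpha>\beta$, Lemma~\ref{limit}$(iii)$ shows $R_k^{\alpha,\beta}(-1)\to 0$, so letting $k\to\infty$ through the infinite set $\{k:a_k^{\alpha,\beta}>0\}$ annihilates both blocks and forces $c_\ell=0$. If $\alpha=\beta$, then $R_k^{\alpha,\beta}(-1)=(-1)^k$; letting $k\to\infty$ through the even elements of $\{k:a_k^{\alpha,\beta}>0\}$ yields $c_\ell=-\sum_{\mu\in J}c_\mu$, while letting $k\to\infty$ through the odd elements yields $c_\ell=\sum_{\mu\in J}c_\mu$, and adding the two gives $c_\ell=0$. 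Since $\ell$ was arbitrary, $c_1=\dots=c_n=0$, which is strict positive definiteness.

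The only step requiring genuine care is the handling of the value $t_\mu=-1$: for the non-spherical $\mathbb{M}^d$ the antipodal manifold is infinite (by the Cartan--Nagano lemma quoted above), so $J$ may be large, and when $\alpha=\beta$ the factor $R_k^{\alpha,\beta}(-1)=(-1)^k$ does not decay — this is precisely where the two-parity hypothesis becomes indispensable, and this term is exactly what obstructs any attempt to weaken the requirement to ``infinitely many $k$'' on the sphere. Everything else is routine bookkeeping with the diagonal term and the decay statements in Lemma~\ref{limit}. I would also note that, run on $S^d$ (so $\alpha=\beta$, with $J$ now at most a singleton), the very same argument reproduces the characterization of \cite{chen} without any appeal to coordinate systems.
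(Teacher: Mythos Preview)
Your proof is correct and follows essentially the same approach as the paper's: invoke Lemma~\ref{matrix}, set $x=x_\ell$, split off the antipodal contributions, and apply the three parts of Lemma~\ref{limit} to eliminate the interior terms and then the antipodal term according to whether $\alpha>\beta$ or $\alpha=\beta$. The only cosmetic difference is that the paper separates out the case $J=\emptyset$ explicitly before treating the antipodal subcases, whereas you handle it uniformly; your organization is marginally cleaner.
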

\begin{proof} Assume $a_k^{\alpha,\beta}>0$ for infinitely many even and infinitely many odd integers $k$, let $n$ be a positive integer and $x_1,x_2,\hdots,x_n$ distinct points in $\mathbb{M}^d$.\ As before, write $A$ to denote the $n\times n$ matrix with entries $A_{\mu\nu}:=K_r^d(\cos{|x_\mu x_\nu|/2})$.\ We intend to show that the equality $\sum_{\mu,\nu=1}^n c_\mu c_\nu A_{\mu\nu}=0$ implies $c_\mu=0$ for all $\mu\in\{1,2,\hdots,n\}$.\ Due to Lemma \ref{matrix}, that corresponds to showing that the only solution of the system
\begin{equation*}
\sum_{\mu=1}^n c_\mu P^{\alpha,\beta}_k(\cos{(|x_\mu x|/2)})=0, \quad x\in \mathbb{M}^d,\quad k\in \{k: a_k^{\alpha,\beta}>0\},
\end{equation*}
is $c_1=c_2=\cdots=c_n=0$.\ In order to achieve that, we will fix an arbitrary coordinate index $\gamma \in \{1,2,\hdots,n\}$ and will conclude that $c_\gamma=0$ via a specific choice for the point $x\in \mathbb{M}^d$ in the system above.\ There are two cases to be considered:\\
\underline{Case 1:} $\cos{(|x_\mu x_\gamma|/2)} \neq -1$, $\mu \neq \gamma$.\\
Choosing $x=x_\gamma$, the system reduces itself to
\begin{equation*}
c_\gamma P^{\alpha,\beta}_k(1)+\sum_{\mu \neq \gamma} c_\mu P^{\alpha,\beta}_k(\cos{(|x_\mu x_\gamma|/2)})=0,\quad k \in \{k: a_k^{\alpha,\beta}>0\}.
\end{equation*}
Since $a_k^{\alpha,\beta}>0$ for infinitely many integers $k$, we can select a sequence $\{k_r\}_{r\in \mathbb{Z}_+}$ of positive integers for which $a_{k_r}^{\alpha,\beta}>0$, $r\in \mathbb{Z}_+$ and $\lim_{r\to \infty}k_r=\infty$.\ Introducing this sequence in the previous system, we are left with
\begin{equation*}
c_\gamma + \sum_{\mu \neq \gamma} c_\mu R^{\alpha,\beta}_{k_r}(\cos{(|x_\mu x_\gamma|/2)})=0,\quad r\in \mathbb{Z}_+.
\end{equation*}
Since $|\cos{(|x_\mu x_\gamma|/2)}| \neq \pm 1$, $\mu \neq \gamma$, Lemma \ref{limit}-$(ii)$ implies that
\begin{equation*}
0=c_\gamma + \lim_{r\to \infty} \sum_{\mu \neq \gamma} c_\mu R^{\alpha,\beta}_{k_r}(\cos{(|x_\mu x_\gamma|/2)}) = c_\gamma.
\end{equation*}
\underline{Case 2:} $\cos{(|x_\mu x_\gamma|/2)}=- 1$, for at least one $\mu \neq \gamma$.\\
In this case we need to consider the antipodal manifold $\Gamma$ of $x_\gamma$.\ The same choice $x=x_\gamma$ supplies the following sub-system
$$
c_\gamma + (-1)^k \frac{P_k^{\beta,\alpha}(1)}{P_k^{\alpha,\beta}(1)}\sum_{x_\mu \in \Gamma}c_\mu +
         \sum_{x_\mu \not \in \Gamma\cup\{x_\gamma\}}c_\mu R^{\alpha,\beta}_k(\cos{(|x_\mu x_\gamma|/2)})=0,\quad k \in \{k:a_k^{\alpha,\beta}>0\}.$$
We now break the proof into two subcases.\\
\underline{Subcase $\alpha > \beta$:} Here we can select a sequence $\{k_r\}_{r \in \mathbb{Z}_+} \subset \mathbb{Z}_+$ so that $a_{k_r}^{\alpha,\beta}>0$, $r\in \mathbb{Z}_+$, and $\lim_{r\to\infty}k_r=\infty$.\
Introducing it the main equation and letting $r \to \infty$ we reach
$$
c_\gamma + \left[\lim_{r \to \infty}(-1)^{k_r} \frac{P_{k_r}^{\beta,\alpha}(1)}{P_{k_r}^{\alpha,\beta}(1)}\right]\sum_{x_\mu \in \Gamma}c_\mu +
         \sum_{x_\mu \not \in \Gamma\cup\{x_\gamma\}}c_\mu \left[\lim_{r \to \infty} R^{\alpha,\beta}_{k_r}(\cos{(|x_\mu x_\gamma|/2)})\right]=0.
         $$
The first limit above is zero due to Lemma \ref{limit}-$(iii)$ while the second one is zero due to Lemma \ref{limit}-$(ii)$.\ Thus, $c_\gamma=0$.\\
\underline{Subcase $\alpha=\beta$:} Here, we select two sequences $\{k_r\}_{r\in\mathbb{Z}_+} \subset 2\mathbb{Z}_+$ and $\{k_s\}_{s\in\mathbb{Z}_+} \subset \mathbb{Z}_+\setminus2\mathbb{Z}_+$
so that $a_{k_r}^{\alpha,\beta}a_{k_s}^{\alpha,\beta}>0$, $r,s\in \mathbb{Z}_+$, and $\lim_{r\to\infty}k_r=\lim_{s \to \infty} k_s=\infty$.\ Introducing them in the main equation and using Lemma \ref{limit}-$(ii)$ once again, the outcome is
\begin{equation*}
c_\gamma + \sum_{x_\mu \in \Gamma}c_\mu=c_\gamma - \sum_{x_\mu \in \Gamma}c_\mu=0.
\end{equation*}
Once again, $c_\gamma=0$.
\end{proof}

The formula
\begin{equation}\label{geg}
P_{2k}^{(d-1)/2}(t)=\frac{P_{2k}^{(d-1)/2}(1)}{P_k^{(d-2)/2,-1/2}(1)}P_k^{(d-2)/2,-1/2}(2t^2-1),\quad t \in [-1,1], \quad k \in \mathbb{Z}_+,
\end{equation}
allows an alternative description for a real, continuous, isotropic and positive definite kernel $K$ on $P^{d}(\mathbb{R})$ via the Gegenbauer polynomials $P_{2k}^{(d-1)/2}$ (\cite[p.59]{szego}).\ In  fact, by \eqref{geg} we have
\begin{equation*}
P_{2k}^{(d-1)/2}(\cos(|xy|/4))=\frac{P_{2k}^{(d-1)/2}(1)}{P_k^{(d-2)/2,-1/2}(1)}P_k^{(d-2)/2,-1/2}(\cos(|xy|/2)),\quad t \in [-1,1], \quad k \in \mathbb{Z}_+,
\end{equation*}
that is,
\begin{equation*}
K_r^d(x,y)=\sum_{k=0}^\infty a_{2k}P_{2k}^{(d-1)/2}(\cos |xy|/4), \quad t\in [-1,1],
\end{equation*}
in which $a_{2k}\geq 0$ is a positive multiple of $a_k^{(d-2)/2,-1/2}$ and $\sum_{k=0}^\infty a_{2k}P_{2k}^{(d-1)/2}(1)<\infty$.\ Hence, the following alternative description holds.

\begin{cor} ($d\geq 2$) Let $K$ be a real, continuous, isotropic and positive definite kernel on $P^d(\mathbb{R})$ written as above.\ It is strictly positive definite on $P^d(\mathbb{R})$ if and only if $a_{2k}>0$ for infinitely many integers.
\end{cor}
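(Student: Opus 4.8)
The plan is to obtain the corollary as a direct translation of Theorem \ref{main} between the two equivalent series representations of the isotropic part. First I would note that $P^d(\mathbb{R})$ is one of the spaces $\mathbb{M}^d$ with $\mathbb{M}^d\neq S^d$, and that for this category the Jacobi parameters are $\alpha=(d-2)/2$ and $\beta=-1/2$. Since $d\geq 2$, one has $\alpha=(d-2)/2\geq 0>-1/2=\beta$, so in particular $\alpha\neq\beta$, and Theorem \ref{main} applies verbatim: $K$ is strictly positive definite on $P^d(\mathbb{R})$ if and only if the set $\{k\in\mathbb{Z}_+:a_k^{(d-2)/2,-1/2}>0\}$ is infinite.

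Next I would observe that passing from the Jacobi expansion \eqref{PDM} to the Gegenbauer expansion displayed just before the statement does not change the kernel $K$ itself. Indeed, the identity \eqref{geg} expresses $P_{2k}^{(d-1)/2}(\cos(|xy|/4))$ as the strictly positive multiple $P_{2k}^{(d-1)/2}(1)\,[P_k^{(d-2)/2,-1/2}(1)]^{-1}$ of $P_k^{(d-2)/2,-1/2}(\cos(|xy|/2))$, so the two series represent the same function on $P^d(\mathbb{R})$, the associated quadratic forms in \eqref{pd} coincide, and the notion of strict positive definiteness is identical for both descriptions. Moreover, as already recorded in the text preceding the statement, $a_{2k}$ is a positive multiple of $a_k^{(d-2)/2,-1/2}$, so $a_{2k}>0$ if and only if $a_k^{(d-2)/2,-1/2}>0$, for every $k\in\mathbb{Z}_+$.

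Combining these two remarks, $\{k\in\mathbb{Z}_+:a_{2k}>0\}$ is infinite if and only if $\{k\in\mathbb{Z}_+:a_k^{(d-2)/2,-1/2}>0\}$ is infinite, which by Theorem \ref{main} holds precisely when $K$ is strictly positive definite on $P^d(\mathbb{R})$; this is the claimed equivalence. Since every ingredient is already available, I do not expect a genuine obstacle here. The only point deserving explicit attention is to make clear that the reindexing $k\mapsto 2k$, coupled with the positivity of the proportionality constant, preserves the "infinitely many" condition and does not surreptitiously introduce a parity restriction — and this is immediate, because the strict inequality $\beta<\alpha$ is exactly what removes the even/odd dichotomy from Theorem \ref{nec} and from its sufficiency counterpart.
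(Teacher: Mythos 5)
Your proposal is correct and follows essentially the same route as the paper: the text preceding the corollary establishes, via the quadratic transformation \eqref{geg}, that the Gegenbauer coefficients $a_{2k}$ are positive multiples of the Jacobi coefficients $a_k^{(d-2)/2,-1/2}$, and the corollary is then an immediate translation of Theorem \ref{main} (applicable since $\mathbb{P}^d(\mathbb{R})\neq S^d$). Your explicit remark that $\alpha>\beta$ removes any parity condition is a harmless elaboration of what the paper leaves implicit.
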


\section{An application on differentiability}

It is known that the isotropic part of a real, continuous, isotropic, and positive definite kernel on $\mathbb{M}^d$ is differentiable up to a certain order (that depends upon $d$) in $(-1,1)$ (\cite{barbosa}).\ As a matter of fact, the following description was obtained in \cite{barbosa}, as a generalization of another one proved in \cite{ziegel}.

\begin{thm}\label{diff} If $K$ is a real, continuous, isotropic and positive definite kernel on $\mathbb{M}^d$, $d\geq 3$, then the isotropic part $K_r^d$ of $K$ is continuously differentiable on $(-1,1)$.\ The derivative $(K_r^d)'$
of $K_r^d$ in $(-1,1)$ satisfies a relation of the form
\begin{equation*}
(1-t^2)(K_r^d)'(t) = f_1(t) - f_2(t),\quad t\in(-1,1),
\end{equation*}
in which $f_1$ and $f_2$ are the isotropic parts of two continuous, positive definite and zonal kernels on some compact two point homogeneous space $\mathbb{M}$ which is isometrically embedded in $\mathbb{M}^d$.\ The specifics on each case are as follows:\\
$(i)$ $\mathbb{M}^d=S^d$: $d\geq 3$ and $\mathbb{M}=S^{d-2}$;\\
$(ii)$ $\mathbb{M}^d=\mathbb{P}^d(\mathbb{R})$: $d\geq 3$ and $\mathbb{M}=\mathbb{P}^{d-2}(\mathbb{R})$;\\
$(iii)$ $\mathbb{M}^d=\mathbb{P}^d(\mathbb{C})$: $d\geq 4$ and $\mathbb{M}=\mathbb{P}^{d-2}(\mathbb{C})$;\\
$(iv)$ $\mathbb{M}^d=\mathbb{P}^d(\mathbb{H})$: $d\geq 8$, $\mathbb{M}=\mathbb{P}^{d/2-2}(\mathbb{C})$, when $d \in 8\mathbb{Z}_++8$ and $\mathbb{M}=\mathbb{P}^{d/2}(\mathbb{C})$, when $d\in 8\mathbb{Z}_+ +12$;\\
$(v)$ $\mathbb{M}^d=\mathbb{P}^{16}(Cay)$: $\mathbb{M}=S^2$.
\end{thm}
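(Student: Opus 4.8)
The plan is to start from the Bochner--Gangolli expansion $K_r^d(t)=\sum_{k\ge 0}a_k^{\alpha,\beta}P_k^{\alpha,\beta}(t)$ of \eqref{PDM}, with $a_k^{\alpha,\beta}\ge 0$ and $\sum_k a_k^{\alpha,\beta}P_k^{\alpha,\beta}(1)<\infty$, and to read off both the $C^1$-regularity of $K_r^d$ and the decomposition of $(1-t^2)(K_r^d)'$ from a \emph{weighted} term-by-term differentiation of this series. Differentiating $K_r^d$ directly is useless --- the derivative $(P_k^{\alpha,\beta})'$ is a Jacobi polynomial of the raised parameters $(\alpha+1,\beta+1)$, pointing the wrong way, and the naive differentiated series need not even converge --- so instead I would introduce the weight $w(t)=(1-t)^{\alpha}(1+t)^{\beta}$, form $G:=wK_r^d$, and differentiate $G$ termwise using the Rodrigues-type identity
\[
\frac{d}{dt}\!\left[(1-t)^{\alpha}(1+t)^{\beta}P_k^{(\alpha,\beta)}(t)\right]=-2(k+1)(1-t)^{\alpha-1}(1+t)^{\beta-1}P_{k+1}^{(\alpha-1,\beta-1)}(t)
\]
from \cite{szego}, which produces a Jacobi polynomial of the \emph{lowered} parameters $(\alpha-1,\beta-1)$. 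The identity requires $\alpha-1>-1$ and $\beta-1>-1$; for $\mathbb{M}^d=S^d$ (where $\alpha=\beta=(d-2)/2$) this is exactly $d\ge 3$, which is the source of the dimension hypothesis. For the remaining spaces, where $\beta-1\le -1$, one must first perform a quadratic change of variable --- of the type behind \eqref{geg} and its analogues for the other projections --- to pass to a Gegenbauer (spherical) model in which the Rodrigues identity does apply, the validity condition again reducing to $d\ge 3$.

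Take first $\mathbb{M}^d=S^d$, the model case. The analytic step is then routine: by the identity above the general term of $G'$ is a bounded multiple of $k\,a_k^{\alpha,\beta}(1-t)^{\alpha-1}(1+t)^{\beta-1}P_{k+1}^{\alpha-1,\beta-1}(t)$, and since $|P_{k+1}^{\alpha-1,\beta-1}(t)|\le P_{k+1}^{\alpha-1,\beta-1}(1)$ on $[-1,1]$ (valid for $d\ge 3$, as then $\alpha-1\ge -1/2$) while $P_{k+1}^{\alpha-1,\beta-1}(1)=\tfrac{\alpha}{k+1}P_k^{\alpha,\beta}(1)$, each term is, on any compact subinterval of $(-1,1)$, bounded by a constant times $a_k^{\alpha,\beta}P_k^{\alpha,\beta}(1)$; hence the series converges uniformly there. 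Thus $G\in C^1(-1,1)$, and since $w$ is positive and smooth on $(-1,1)$, $K_r^d=G/w\in C^1(-1,1)$, which is the first assertion. Expanding $(1-t^2)(G/w)'$ and using $(1-t^2)\,w'/w=(\beta-\alpha)-(\alpha+\beta)t$ gives
\[
(1-t^2)(K_r^d)'(t)=\big[(\alpha-\beta)+(\alpha+\beta)t\big]K_r^d(t)-2\sum_{k\ge 0}(k+1)\,a_k^{\alpha,\beta}\,P_{k+1}^{\alpha-1,\beta-1}(t),
\]
so one sets $f_1(t):=[(\alpha-\beta)+(\alpha+\beta)t]K_r^d(t)$ and $f_2(t):=2\sum_k(k+1)a_k^{\alpha,\beta}P_{k+1}^{\alpha-1,\beta-1}(t)$.

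Now $f_2$ has nonnegative coefficients in the Jacobi basis of parameters $(\alpha-1,\beta-1)$, and its summability is automatic: $\sum_k 2(k+1)a_k^{\alpha,\beta}P_{k+1}^{\alpha-1,\beta-1}(1)=2\alpha\sum_k a_k^{\alpha,\beta}P_k^{\alpha,\beta}(1)<\infty$. As for $f_1$, the three-term recurrence, whose off-diagonal coefficients are positive and whose diagonal coefficient $b_n=\tfrac{\beta^2-\alpha^2}{(2n+\alpha+\beta)(2n+\alpha+\beta+2)}$ satisfies $1+b_n>0$, shows after a short estimate that $f_1$ also has nonnegative Jacobi coefficients in the $(\alpha,\beta)$ basis, so $f_1$ is the isotropic part of a continuous, isotropic, positive definite kernel on $\mathbb{M}^d$ itself, hence on any isometrically embedded subspace. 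By \eqref{PDM}, $f_1$ and $f_2$ are then isotropic parts of continuous, isotropic, positive definite kernels on $\mathbb{M}^d$ and on the space with parameters $(\alpha-1,\beta-1)$, respectively; for $S^d$ the latter is $S^{d-2}$, giving $(i)$. For $\mathbb{P}^d(\mathbb{R})$, $\mathbb{P}^d(\mathbb{C})$, $\mathbb{P}^d(\mathbb{H})$ and $\mathbb{P}^{16}(Cay)$, the same computation runs in the Gegenbauer model produced by the preliminary quadratic substitution, and, since the bare parameters $(\alpha-1,\beta-1)$ there are not those of a two-point homogeneous space, the substitution is what identifies the second factor with the spaces listed in $(ii)$--$(v)$: in particular with a complex projective space in $(iv)$ (the two subcases coming from the parity constraint on the quaternionic dimension) and with $S^2$ in $(v)$. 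Since $\mathbb{M}\hookrightarrow\mathbb{M}^d$ in every case (Lemma \ref{embed} together with the case-specific embeddings), this is the claim.

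The step I expect to be the main obstacle is precisely this last identification: setting up, uniformly over the five families, the correct quadratic substitution so that --- after running the weighted differentiation in the spherical model --- the shifted parameters land on a genuine two-point homogeneous space, and checking that the decomposition obtained there transports back faithfully (in particular that the extra algebraic factors coming from the change of variable keep the Jacobi coefficients of both pieces nonnegative and summable). This bookkeeping is what pins down why $\mathbb{M}$ is a complex projective space, with two subcases, in case $(iv)$ and why $\mathbb{M}=S^2$ in case $(v)$, and it is also where the hypothesis $d\ge 3$ reveals itself as exactly the right threshold.
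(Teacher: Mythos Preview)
The paper does not prove this theorem at all: it is quoted from \cite{barbosa} (see the sentence preceding the statement) and used as input to Theorem~\ref{diffSPD}. So there is no ``paper's own proof'' to compare your attempt to. What the paper does record, in the theorem immediately following (the one giving closed forms for $f_1$ and $f_2$), is the \emph{shape} of the decomposition actually obtained in \cite{barbosa}, and that shape differs from yours. In the reference, both $f_1$ and $f_2$ are expansions in the \emph{same} lowered Jacobi basis --- either $(\alpha-1,\beta)$ or $(\alpha-1,\beta-1)$ depending on the case --- with $f_1$ built from the coefficients $a_n^{\alpha-1,\beta}(K_r^d)$ (or $a_n^{\alpha-1,\beta-1}(K_r^d)$) of $K_r^d$ regarded as a kernel on $\mathbb{M}$, and $f_2$ from shifted combinations of the original $a_n^{\alpha,\beta}(K_r^d)$. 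Your $f_1=[(\alpha-\beta)+(\alpha+\beta)t]K_r^d(t)$ lives in the $(\alpha,\beta)$ basis and relies on a positivity argument via the three-term recurrence that you only sketch; this is a genuinely different split.

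On substance: your spherical argument is essentially fine, and the Rodrigues identity is indeed the right tool there. The gap is exactly where you flag it yourself. The phrase ``quadratic substitution \ldots\ to pass to a Gegenbauer model'' covers the entire content of cases $(ii)$--$(v)$, and you have not specified which substitution, why the lowered parameters then match a two-point homogeneous space, or why the extra algebraic factors preserve nonnegativity and summability of the coefficients. In particular, formula~\eqref{geg} handles $\mathbb{P}^d(\mathbb{R})$ only; the complex, quaternionic and Cayley cases require different identities (and the split into two subcases in $(iv)$ and the appearance of $S^2$ in $(v)$ are not at all transparent from your outline). As written, the proposal is a correct plan for case~$(i)$ and a plausible but unexecuted program for the rest; to complete it you would need to carry out the case-by-case Jacobi identities that \cite{barbosa} performs, or consult that reference directly.
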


Below, we will need some additional information provided by the proof of Theorem \ref{diff} in \cite{barbosa}.\ But before that, we need to mention the following technical result.

\begin{lem} Let $K$ be a real, continuous, isotropic and positive definite kernel on $\mathbb{M}^d$.\ Assume there exists an isometric embedding from a space $M$ from the list introduced at the beginning of the paper
into $\mathbb{M}^d$.\ Then $K$ is continuous, isotropic and positive definite on $M$.\ In addition, if $K$ is strictly positive definite on $\mathbb{M}^d$, then it is so on $M$.
\end{lem}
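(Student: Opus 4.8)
The plan is to read the statement as a transport-of-structure claim for the \emph{pullback} kernel. Fix the given isometric embedding $\phi : M \to \mathbb{M}^d$; by ``$K$ on $M$'' one means the kernel $K^\phi(x,y):=K(\phi(x),\phi(y))$, $x,y\in M$. I would then verify, in turn, that $K^\phi$ is continuous, isotropic, positive definite, and --- under the extra hypothesis --- strictly positive definite on $M$, each step being short.

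Continuity and isotropy use only that $\phi$ preserves distances (hence is continuous). Continuity of $K^\phi=K\circ(\phi\times\phi)$ is then immediate from continuity of $K$ on $\mathbb{M}^d\times\mathbb{M}^d$. For isotropy, distance preservation gives $|\phi(x)\phi(y)|=|xy|_M$, so the isotropic representation of $K$ on $\mathbb{M}^d$ yields
\[
K^\phi(x,y)=K_r^d(\cos(|xy|_M/2)),\qquad x,y\in M;
\]
thus $K^\phi$ depends only on $|xy|_M$, and since the motions of $M$ act by isometries it is invariant under them, i.e.\ isotropic on $M$, with isotropic part the restriction of $K_r^d$ to the range of $\cos(|xy|_M/2)$ as $x,y$ range over $M$ (a subset of $[-1,1]$).

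Positive definiteness, and its strict version, I would deduce from injectivity of $\phi$. Given distinct $y_1,\dots,y_n\in M$ and scalars $c_1,\dots,c_n\in\mathbb{R}$, the points $\phi(y_1),\dots,\phi(y_n)$ are distinct in $\mathbb{M}^d$, and
\[
\sum_{\mu,\nu=1}^{n}c_\mu c_\nu K^\phi(y_\mu,y_\nu)=\sum_{\mu,\nu=1}^{n}c_\mu c_\nu K(\phi(y_\mu),\phi(y_\nu)).
\]
The right-hand side is $\geq 0$ since $K$ is positive definite on $\mathbb{M}^d$, giving positive definiteness of $K^\phi$; and if $K$ is strictly positive definite on $\mathbb{M}^d$, the same right-hand side is $>0$ whenever the $c_\mu$ are not all zero, giving strict positive definiteness of $K^\phi$.

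I do not expect a genuine obstacle, since the argument is formal. The two points that deserve explicit mention are that the transfer of (strict) positive definiteness relies on $\phi$ being injective, so that an admissible configuration in $M$ produces an admissible configuration in $\mathbb{M}^d$, and that $\phi$ preserves the distance function exactly --- which is where the common normalization adopted for all the spaces $\mathbb{M}^d$, and encoded in Lemma \ref{embed}, enters --- so that the radial profile $K_r^d$ can be reused verbatim on $M$.
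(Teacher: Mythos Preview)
Your argument is correct and is exactly the standard transport-of-structure proof one would expect here. The paper itself does not give a proof of this lemma --- it simply states ``It is quite standard and it will be omitted'' --- so your write-up in fact supplies what the paper leaves to the reader, and does so along the intended lines.
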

\begin{proof} It is quite standard and it will be omitted.
\end{proof}

Based upon the previous lemma, here we will strengthen the notation of the coefficient $a_k^{\alpha,\beta}$ in the expansion (\ref{PDM}) of the isotropic part $K_r^d$ of a real, continuous, isotropic and positive definite kernel $K$ on $\mathbb{M}^d$, by writing $a_k^{\alpha,\beta}(K_r^d)$ instead.\ If $K$ is positive definite on another space $M$, as in the setting of the previous lemma, we may write $a_k^{\alpha',\beta'}(K_r^d)$ with the actual values of $\alpha' <\alpha$ and $\beta'$ that are attached to the space $M$.

\begin{thm}
Under the setting adopted in Theorem \ref{diff}, the functions $f_1$ and $f_2$ have closed forms as follows.\ In cases $(ii)$ and $(iii)$,
\begin{equation*}
f_1=\sum_{n=0}^{\infty} b_n^{\alpha-1,\beta}R_n^{\alpha-1,\beta}
\end{equation*}
and
\begin{equation*}
f_2=\sum_{n=2}^{\infty}\left(b_n^{\alpha,\beta} + b_{n-1}^{\alpha,\beta}\right) R_n^{\alpha-1,\beta},
\end{equation*}
in which the $b_n^{\alpha-1,\beta}$ and the $b_{n}^{\alpha,\beta}$ are positive multiples of $a_n^{\alpha-1,\beta}(K_r^d)$ and $a_n^{\alpha,\beta}(K_r^d)$ respectively.\ In cases $(iii)$ and $(iv)$, 
\begin{equation*}
f_1=\sum_{n=0}^{\infty} b_n^{\alpha-1,\beta-1}R_n^{\alpha-1,\beta-1}
\end{equation*}
and
\begin{equation*}
f_2=\sum_{n=2}^{\infty}b_{n-1}^{\alpha,\beta} R_n^{\alpha-1,\beta-1}
\end{equation*}
in which the $b_n^{\alpha-1,\beta-1}$ and $b_{n}^{\alpha,\beta}$ are positive multiples of $a_n^{\alpha-1,\beta-1}(K_r^d)$ and $a_{n-1}^{\alpha,\beta}(K_r^d)$ respectively.
\end{thm}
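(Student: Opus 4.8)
The plan is to extract $f_1$ and $f_2$ directly from the proof of Theorem~\ref{diff} in \cite{barbosa} and then to recognize, in terms of the Jacobi expansions of $K_r^d$ on $\mathbb{M}^d$ and on the embedded space $\mathbb{M}$, the coefficients that already occur there. Recall that in \cite{barbosa} the relation $(1-t^2)(K_r^d)'(t)=f_1(t)-f_2(t)$ is produced by differentiating \eqref{PDM} term by term — the differentiated series converging uniformly on compact subsets of $(-1,1)$ — and inserting, in each of the five cases, classical relations for Jacobi polynomials: the derivative formula $\tfrac{d}{dt}P_k^{\alpha,\beta}(t)=\tfrac12(k+\alpha+\beta+1)P_{k-1}^{\alpha+1,\beta+1}(t)$, the contiguity relations expressing $(1-t)P_m^{\alpha+1,\beta}$ and $(1+t)P_m^{\alpha,\beta+1}$ as positively weighted combinations of $P_m^{\alpha,\beta}$ and $P_{m+1}^{\alpha,\beta}$, and the contiguity relation $(2m+\alpha+\beta)P_m^{\alpha-1,\beta}(t)=(m+\alpha+\beta)P_m^{\alpha,\beta}(t)-(m+\beta)P_{m-1}^{\alpha,\beta}(t)$ (together with its $\beta$-analogue). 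After these substitutions and a regrouping, $(1-t^2)(K_r^d)'(t)$ is displayed as a single series in the normalized polynomials $R_n^{\alpha-1,\beta}$ — in the cases where $\mathbb{M}$ keeps the parameter $\beta$ — or in $R_n^{\alpha-1,\beta-1}$ — in the cases where $\mathbb{M}$ also carries the smaller $\beta$; the task of the present theorem is to read off the nonnegative part and the subtracted nonnegative part of that series.

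First I would treat the cases in which $\mathbb{M}$ retains $\beta$ (the first pair of displayed formulas). By the Lemma immediately preceding the statement, $K$ is continuous, isotropic and positive definite on $\mathbb{M}$; hence $K_r^d$ also admits the expansion $K_r^d=\sum_{n\geq0}a_n^{\alpha-1,\beta}(K_r^d)R_n^{\alpha-1,\beta}$ with $a_n^{\alpha-1,\beta}(K_r^d)\geq0$. Comparing this with $K_r^d=\sum_n a_n^{\alpha,\beta}(K_r^d)R_n^{\alpha,\beta}$ through the connection formula between the two Jacobi families, I would verify that the regrouping from \cite{barbosa} organizes the series so that its nonnegative part reassembles precisely into $f_1=\sum_n b_n^{\alpha-1,\beta}R_n^{\alpha-1,\beta}$, with $b_n^{\alpha-1,\beta}$ a strictly positive constant (an explicit product of $\Gamma$-ratios and of factors of the form $(m+\alpha)(m+\beta)/(2m+\alpha+\beta)$, $m+\alpha+\beta+1$, and $P_k^{\alpha,\beta}(1)/P_n^{\alpha-1,\beta}(1)$) times $a_n^{\alpha-1,\beta}(K_r^d)$; meanwhile the two index shifts $P_{n-1}^{\alpha,\beta}\mapsto R_n^{\alpha-1,\beta}$ and $P_{n+1}^{\alpha,\beta}\mapsto R_n^{\alpha-1,\beta}$ coming from the recurrence make the complementary part pick up two consecutive original coefficients, yielding $f_2=\sum_{n\geq2}\big(b_n^{\alpha,\beta}+b_{n-1}^{\alpha,\beta}\big)R_n^{\alpha-1,\beta}$ with $b_n^{\alpha,\beta}$ a positive multiple of $a_n^{\alpha,\beta}(K_r^d)$. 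Strict positivity of all these constants is immediate once one recalls that $\alpha>\beta>-1$ in every one of the cases in question.

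The remaining cases (the second pair of displayed formulas) run along the same lines but are lighter, since there the relevant identity — a Rodrigues-type lowering formula, in the Gegenbauer model $\tfrac{d}{dt}\big[(1-t^2)^{\lambda-1/2}C_k^{\lambda}(t)\big]$ being a constant times $(1-t^2)^{\lambda-3/2}C_{k+1}^{\lambda-1}(t)$, equivalently an expression for $(1-t^2)\tfrac{d}{dt}P_k^{\alpha,\beta}$ in terms of $P_k^{\alpha-1,\beta-1}$ and $P_{k+1}^{\alpha-1,\beta-1}$ alone — involves only one index shift. Using the expansion of $K_r^d$ on $\mathbb{M}$ (now with parameters $(\alpha-1,\beta-1)$), the nonnegative part becomes $f_1=\sum_n b_n^{\alpha-1,\beta-1}R_n^{\alpha-1,\beta-1}$ with $b_n^{\alpha-1,\beta-1}$ a positive multiple of $a_n^{\alpha-1,\beta-1}(K_r^d)$, and the single shift leaves $f_2=\sum_{n\geq2}b_{n-1}^{\alpha,\beta}R_n^{\alpha-1,\beta-1}$ with $b_{n-1}^{\alpha,\beta}$ a positive multiple of $a_{n-1}^{\alpha,\beta}(K_r^d)$.

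I expect the main obstacle to be precisely this last bookkeeping. One has to make sure that the regrouping of the differentiated series and the passage through the connection formula introduce no sign, and — most importantly — that the coefficient attached to $R_n^{\alpha-1,\beta}$ (respectively $R_n^{\alpha-1,\beta-1}$) in the nonnegative part is a genuine positive multiple of the true Jacobi coefficient $a_n^{\alpha-1,\beta}(K_r^d)$ (respectively $a_n^{\alpha-1,\beta-1}(K_r^d)$) of $K$ viewed as a kernel on $\mathbb{M}$, rather than some spurious linear combination, for it is exactly this that makes $f_1$ and $f_2$ the isotropic parts of positive definite kernels on $\mathbb{M}$ that Theorem~\ref{diff} requires. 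Everything else — uniform convergence, admissibility of the term-by-term differentiation and rearrangement, and the explicit closed forms of the scalar factors — is routine and is already available from \cite{barbosa} and from the standard Jacobi-polynomial machinery in \cite{szego}.
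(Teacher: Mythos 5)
The paper offers no proof of this statement: it is presented as information extracted from the proof of Theorem \ref{diff} in \cite{barbosa}, so there is nothing in the text to compare your argument against step by step. Your plan --- term-by-term differentiation of \eqref{PDM}, the Jacobi derivative and contiguity relations, and a regrouping that identifies the coefficients of $R_n^{\alpha-1,\beta}$ (resp.\ $R_n^{\alpha-1,\beta-1}$) with positive multiples of the Jacobi coefficients of $K$ viewed on the embedded space $\mathbb{M}$ --- is precisely the derivation carried out in \cite{barbosa}, so your approach coincides with the paper's; note only that the decisive bookkeeping you yourself flag as ``the main obstacle'' is, in your sketch just as in the paper, deferred to \cite{barbosa} rather than actually verified.
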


The main theorem in this section is now within reach.\ It demands the following additional isometric embeddings: $\mathbb{P}^{d}(\mathbb{C}) \hookrightarrow \mathbb{P}^{d+2}(\mathbb{C})$, $d=4,6,\ldots$ and $\mathbb{P}^{d}(\mathbb{H}) \hookrightarrow \mathbb{P}^{d+4}(\mathbb{H})$, $d=8,12,\ldots$ (\cite[p.66]{askey} and references therein).

\begin{thm}\label{diffSPD} Under the setting adopted in Theorem \ref{diff}, if the kernel $K$ is strictly positive definite on $\mathbb{M}^d$, then the functions $f_1$ and $f_2$ are actually the isotropic parts of continuous, isotropic, and strictly positive definite kernels on $\mathbb{M}$.
\end{thm}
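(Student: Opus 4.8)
The plan is to combine the closed forms for $f_1$ and $f_2$ recorded in the preceding theorem --- together with the classical Gegenbauer differentiation formula underlying the case $\mathbb{M}^d=S^d$ --- with the characterization of strict positive definiteness on a two-point compact homogeneous space: Theorem~\ref{main} when the space in question is not a sphere, the result of \cite{chen} when it is a sphere of dimension at least $2$, and the result of \cite{mene} when it is the circle. The only further ingredient is the transfer principle supplied by the embedding lemma of this section: if $K$ is strictly positive definite on $\mathbb{M}^d$ and $M\hookrightarrow\mathbb{M}^d$ for a space $M$ from Wang's list, then $K$ is strictly positive definite on $M$. Since $\mathbb{M}\hookrightarrow\mathbb{M}^d$ in each case of Theorem~\ref{diff} --- using, in case $(iv)$, the auxiliary embeddings $\mathbb{P}^{d}(\mathbb{C})\hookrightarrow\mathbb{P}^{d+2}(\mathbb{C})$ and $\mathbb{P}^{d}(\mathbb{H})\hookrightarrow\mathbb{P}^{d+4}(\mathbb{H})$ to reach the relevant copy of $\mathbb{P}^{d/2}(\mathbb{C})$ --- we obtain at once that $K$ is strictly positive definite on $\mathbb{M}$; when $\mathbb{M}$ is a sphere we shall also use that $K$ is strictly positive definite on a suitable sphere sitting inside $\mathbb{M}^d$, such as the antipodal manifold of a point of $\mathbb{M}^d$ in case $(v)$.

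Consider $f_1$. In every case its expansion $f_1=\sum_{n\ge 0}b_n R_n^{\alpha',\beta'}$ runs over the Jacobi polynomials whose upper indices $(\alpha',\beta')$ are exactly those of $\mathbb{M}$, and each $b_n$ is a strictly positive multiple of $a_n^{\alpha',\beta'}(K_r^d)$, i.e.\ of the $n$-th coefficient of $K$ regarded as a continuous, isotropic and positive definite kernel on $\mathbb{M}$. Hence $\{n:b_n>0\}=\{n:a_n^{\alpha',\beta'}(K_r^d)>0\}$. Since $K$ is strictly positive definite on $\mathbb{M}$, the characterization applied to $\mathbb{M}$ shows that this set is infinite if $\mathbb{M}$ is not a sphere, contains infinitely many even and infinitely many odd integers if $\mathbb{M}$ is a sphere of dimension at least $2$, and meets every full arithmetic progression if $\mathbb{M}=S^1$. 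The coefficient set of $f_1$ has the same property, so running the characterization backwards on $\mathbb{M}$ we conclude that $f_1$ is the isotropic part of a continuous, isotropic and strictly positive definite kernel on $\mathbb{M}$.

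The argument for $f_2$ is analogous, except that its coefficient set is controlled by the coefficients of $K$ on $\mathbb{M}^d$ rather than on $\mathbb{M}$: by the closed forms, for $n\ge 2$ the $n$-th coefficient of $f_2$ is positive precisely when $a_n^{\alpha,\beta}(K_r^d)>0$ or $a_{n-1}^{\alpha,\beta}(K_r^d)>0$ in the first group of cases, and precisely when $a_{n-j}^{\alpha,\beta}(K_r^d)>0$ for the pertinent fixed shift $j\in\{1,2\}$ in the remaining cases, where $(\alpha,\beta)$ are the Jacobi parameters of $\mathbb{M}^d$. As $K$ is strictly positive definite on $\mathbb{M}^d$, the set $\{m:a_m^{\alpha,\beta}(K_r^d)>0\}$ is infinite, and a union of finitely many finite translates of an infinite set, intersected with a cofinite set, is infinite; hence the coefficient set of $f_2$ is infinite, which already disposes of every case in which $\mathbb{M}$ is not a sphere. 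When $\mathbb{M}$ is a sphere one must also check that the coefficient set of $f_2$ contains infinitely many integers of each parity. If $\mathbb{M}^d$ is itself a sphere (case $(i)$, and the coincidence $\mathbb{P}^1(\mathbb{R})\cong S^1$ should one allow it), \cite{chen} --- respectively \cite{mene} --- gives the parity, respectively arithmetic-progression, condition for $\{m:a_m^{\alpha,\beta}(K_r^d)>0\}$, and a fixed index shift only permutes residue classes, so the condition passes to the coefficient set of $f_2$. If $\mathbb{M}$ is a sphere while $\mathbb{M}^d$ is not (case $(v)$, and $\mathbb{P}^2(\mathbb{C})\cong S^2$ should it arise), one derives the required parity from the closed form together with the strict positive definiteness of $K$ on the sphere inside $\mathbb{M}^d$ that controls those coefficients. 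In every case the characterization applied on $\mathbb{M}$ then gives that $f_2$ is the isotropic part of a continuous, isotropic and strictly positive definite kernel on $\mathbb{M}$.

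The part of the argument that really calls for care is the case-by-case bookkeeping compressed into the last two paragraphs: verifying, in each of the five situations of Theorem~\ref{diff}, that the upper indices appearing in the expansions of $f_1$ and $f_2$ are indeed the Jacobi parameters of $\mathbb{M}$ (so that the coefficients of $f_1$ genuinely are positive multiples of the coefficients of $K$ on $\mathbb{M}$); identifying which form of the characterization applies to $\mathbb{M}$ and which to $\mathbb{M}^d$; and, in the sphere sub-cases --- $\mathbb{M}^d=S^d$, $\mathbb{M}^d=\mathbb{P}^{16}(Cay)$, and the low-dimensional coincidences --- pushing the even/odd (or arithmetic-progression) refinement through the fixed index shift built into the expansion of $f_2$. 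Once this is carried out, the proof reduces to the two applications of the characterization described above.
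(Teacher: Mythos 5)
Your proposal follows essentially the same route as the paper: restrict $K$ to $\mathbb{M}$ via the embedding lemma, apply the necessity half of the strict positive definiteness characterization on $\mathbb{M}$ to control the coefficients of $f_1$ and on $\mathbb{M}^d$ to control those of $f_2$, and then run the sufficiency half backwards on $\mathbb{M}$. The only substantive difference is one of coverage: the paper details only case $(ii)$ and waves at $(i)$ and $(iii)$, whereas you attempt all five cases; in case $(v)$ your claim that the even/odd refinement needed for $S^2$ can be ``derived from the strict positive definiteness of $K$ on the sphere inside $\mathbb{M}^d$'' is asserted rather than proved --- the coefficients of $f_2$ there are shifts of the $a_m^{\alpha,\beta}(K_r^d)$ on $\mathbb{P}^{16}(Cay)$, for which Theorem~\ref{main} yields only infinitude, not parity --- but the paper leaves that case entirely unaddressed, so this is a gap you share with (indeed, inherit from) the source rather than one you introduce.
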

\begin{proof} We will stress a proof of the theorem in the case $(ii)$ only.\ In case $(i)$, it can be adapted from results in \cite{ziegel} along with the characterization for strict positive definiteness on spheres mentioned at the introduction and proved in \cite{chen}.\ In case $(iii)$, the arguments are similar to those in case $(ii)$.\ If $K$ is strictly positive definite on $\mathbb{P}^d(\mathbb{R})$, it is so on $\mathbb{P}^{d-2}(\mathbb{R})$.\ In particular, $a_n^{\alpha-1,-1/2}(K_r^d)>0$ for infinitely many integers, and the same is true for the $a_n^{\alpha,-1/2}(K_r^d)$.\ Recalling the definitions for the coefficients in the expansions defining $f_1$ and $f_2$ in the previous proposition, we conclude that $b_n^{\alpha-1,-1/2}>0$ for infinitely integers and the same is true for the coefficients $b_n^{\alpha,-1/2} + b_{n-1}^{\alpha,-1/2}$.\ Thus, due to Theorem \ref{main}, $f_1$ and $f_2$ are the isotropic parts of strictly positive definite kernels on $\mathbb{P}^{d-1}(\mathbb{R})$.
\end{proof}

\section{Appendix}

In this section, we include an independent proof for Theorem \ref{main} in the case in which $\mathbb{M}^d =S^1$.\ This case is different from the others in the sense that the additional condition for strict positive definiteness on the coefficients in the expansion of the isotropic part of the kernel has a different structure.\ We emphasize that the characterization to be deduced here was originally obtained in \cite{mene}, but via a complexification approach.

Lemma \ref{matrix} takes the following form (see \cite{xucheney} and references quoted there).

\begin{lem}\label{matrix1} Let $K$ be the isotropic part of a nonzero, continuous, isotropic and positive definite kernel on $S^1$.\ It is strictly positive definite if and only if there exists no non-zero function $f: \mathbb{Z}_+ \to \mathbb{C}$ of the form
\begin{equation*}
f(k)=\sum_{\mu=1}^n c_\mu e^{i\theta_\mu k}, \quad \{\theta_1, \theta_1, \ldots, \theta_n\} \subset [0,2\pi),\quad \{c_1,c_2, \ldots, c_n\} \subset \mathbb{R},
\end{equation*}
 that vanishes on $\{k : a_k^{-1/2,-1/2} >0\}$.
\end{lem}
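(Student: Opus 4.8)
The plan is to repeat the argument of Lemma \ref{matrix}, with the Gin\'e addition formula replaced by the elementary identity $\cos(k(\theta-\psi))=\re(e^{ik\theta}\,\overline{e^{ik\psi}})$ and the Jacobi polynomials $P_k^{-1/2,-1/2}$ replaced by their closed form in terms of Chebyshev polynomials. First I would fix the parametrization $S^1=\mathbb{R}/2\pi\mathbb{Z}$, so that a point $x$ corresponds to an angle $\theta_x\in[0,2\pi)$ and, under the distance normalization adopted in the paper, $\cos(|xy|/2)=\cos(\theta_x-\theta_y)$. Recalling (\cite[p.60]{szego}) that $R_k^{-1/2,-1/2}=T_k$, the $k$-th Chebyshev polynomial, and that $T_k(\cos s)=\cos ks$, the representation (\ref{PDM}) for $K_r^1$ becomes
\begin{equation*}
K(x_\mu,x_\nu)=\sum_{k=0}^\infty d_k\cos(k(\theta_\mu-\theta_\nu)),\qquad d_k:=a_k^{-1/2,-1/2}P_k^{-1/2,-1/2}(1),
\end{equation*}
where $d_k\ge 0$, $\sum_k d_k<\infty$, and, since $P_k^{-1/2,-1/2}(1)>0$, the set $\{k:d_k>0\}$ coincides with $\{k:a_k^{-1/2,-1/2}>0\}$.

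Next I would compute the quadratic form exactly as in the proof of Lemma \ref{matrix}: writing $\cos(k(\theta_\mu-\theta_\nu))=\re(e^{ik\theta_\mu}\overline{e^{ik\theta_\nu}})$ and using that the $c_\mu$ are real,
\begin{equation*}
\sum_{\mu,\nu=1}^n c_\mu c_\nu K(x_\mu,x_\nu)=\sum_{k=0}^\infty d_k\sum_{\mu,\nu=1}^n c_\mu c_\nu\cos(k(\theta_\mu-\theta_\nu))=\sum_{k=0}^\infty d_k\left|\sum_{\mu=1}^n c_\mu e^{ik\theta_\mu}\right|^2 .
\end{equation*}
Since every $d_k\ge 0$, this sum vanishes if and only if $\sum_{\mu=1}^n c_\mu e^{ik\theta_\mu}=0$ for every $k$ with $d_k>0$, that is, if and only if the function $f(k)=\sum_{\mu=1}^n c_\mu e^{i\theta_\mu k}$ vanishes on $\{k:a_k^{-1/2,-1/2}>0\}$. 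This is the exact Fourier-side analogue of Lemma \ref{matrix} and already carries the full content of the statement.

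Finally I would translate this into the assertion of the lemma. By definition, $K$ fails to be strictly positive definite precisely when there are $n\ge 1$ distinct points $x_1,\dots,x_n\in S^1$ and real scalars $c_1,\dots,c_n$, not all zero, annihilating the quadratic form; by the previous step this happens precisely when there are distinct $\theta_1,\dots,\theta_n\in[0,2\pi)$ and real $c_1,\dots,c_n$, not all zero, for which $f(k)=\sum_\mu c_\mu e^{i\theta_\mu k}$ vanishes on $\{k:a_k^{-1/2,-1/2}>0\}$. It then remains to see that such a configuration exists if and only if there is a non-zero $f$ of the displayed form, now without insisting that the nodes be distinct, vanishing there. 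In one direction, a non-zero $f=\sum_\mu c_\mu e^{i\theta_\mu k}$ can be rewritten, after collecting equal nodes and discarding zero coefficients, as a sum over finitely many distinct nodes with real non-zero coefficients, and since $f\not\equiv0$ this sum is non-empty. In the other direction, if the $\theta_\mu$ are distinct and the $c_\mu$ are not all zero, then $f\not\equiv0$ because the characters $k\mapsto e^{i\theta_\mu k}$ with distinct $\theta_\mu$ are linearly independent on $\mathbb{Z}_+$ (a Vandermonde argument applied to $f(0),f(1),\dots,f(n-1)$). Combining the two translations yields the stated equivalence. The only step requiring genuine care is this last piece of bookkeeping — reconciling the ``distinct points, coefficients not all zero'' quantifier in the definition of strict positive definiteness with the ``non-zero $f$'' phrasing — and it rests entirely on the linear independence of exponentials with distinct frequencies; everything else is a transcription of Lemma \ref{matrix} to the Fourier setting.
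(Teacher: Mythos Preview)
Your argument is correct. The paper does not actually supply a proof of this lemma; it merely states that Lemma~\ref{matrix} ``takes the following form'' on $S^1$ and refers the reader to \cite{xucheney} and its references. Your write-up is precisely the specialization of Lemma~\ref{matrix} that the paper is alluding to: the addition formula collapses to $\cos(k(\theta_\mu-\theta_\nu))=\re\bigl(e^{ik\theta_\mu}\overline{e^{ik\theta_\nu}}\bigr)$, giving $c^tAc=\sum_k d_k\bigl|\sum_\mu c_\mu e^{ik\theta_\mu}\bigr|^2$, and the rest is the same nonnegativity argument. The final bookkeeping step (reconciling ``distinct nodes, not all $c_\mu$ zero'' with ``non-zero $f$'' via linear independence of the exponentials) is a detail the paper and \cite{xucheney} leave implicit, and you handle it correctly.
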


\begin{thm}\label{necs1} Let $K$ be a nonzero, continuous, isotropic and positive definite kernel on $S^1$.\ In order that it be strictly positive definite it is necessary that, in the representation (\ref{PDM}) for $K_r^1$, the set $\{k: a_{|k|}^{-1/2,-1/2}>0\}$ intersects every full arithmetic progression in $\mathbb{Z}$.
\end{thm}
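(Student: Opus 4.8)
The plan is to prove the contrapositive by invoking Lemma~\ref{matrix1}.\ Suppose the set $\{k\in\mathbb{Z}_+: a_k^{-1/2,-1/2}>0\}$ misses some full arithmetic progression $\{b+m\ell:\ell\in\mathbb{Z}\}$, where we may assume $m\geq1$ and $0\leq b<m$.\ The first step is to rephrase the hypothesis as a statement about the support of the coefficient sequence: it says precisely that $a_{|b+m\ell|}^{-1/2,-1/2}=0$ for every $\ell\in\mathbb{Z}$.\ A short computation identifies the set $P:=\{|b+m\ell|:\ell\in\mathbb{Z}\}$ with $\{j\in\mathbb{Z}_+: j\equiv\pm b\pmod{m}\}$ --- the indices $\ell\geq0$ produce exactly the nonnegative integers congruent to $b$, and the indices $\ell<0$ produce exactly the positive integers congruent to $-b$.\ Therefore $a_j^{-1/2,-1/2}=0$ for every $j\in P$, and consequently $\{k: a_k^{-1/2,-1/2}>0\}\subseteq\mathbb{Z}_+\setminus P$.

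The second step is to exhibit the forbidden function.\ Set $\zeta=e^{2\pi i/m}$ and define
\[
f(k)=\sum_{j=0}^{m-1}\frac{2}{m}\,\cos\!\left(\frac{2\pi jb}{m}\right)\zeta^{jk},\qquad k\in\mathbb{Z}_+.
\]
This is already in the form required by Lemma~\ref{matrix1}, with real scalars $c_j=(2/m)\cos(2\pi jb/m)$ and phases $\theta_j=2\pi j/m\in[0,2\pi)$.\ Using the elementary identity $\sum_{j=0}^{m-1}\zeta^{jt}=m$ when $m\mid t$ and $\sum_{j=0}^{m-1}\zeta^{jt}=0$ otherwise, one checks that $f$ is real-valued, that $f(k)=0$ whenever $k\not\equiv\pm b\pmod{m}$, and that $f(k)\geq1$ whenever $k\equiv b\pmod{m}$.\ In particular $f$ is not identically zero (for instance $f(b)\geq1$) and $f$ vanishes on $\mathbb{Z}_+\setminus P$, hence on the set $\{k: a_k^{-1/2,-1/2}>0\}$.\ By Lemma~\ref{matrix1}, $K$ is not strictly positive definite, which is the desired contrapositive.

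The proof is elementary once Lemma~\ref{matrix1} is in hand; the only point requiring some care is the bookkeeping that translates between the one-sided index set $\mathbb{Z}_+$ appearing in the expansion \eqref{PDM} and the two-sided arithmetic-progression language of the statement --- namely, the identification of $P$ above, together with the observation that an exponential polynomial of period $m$ can annihilate precisely the complement of a union of residue classes modulo $m$.\ No genuine obstacle is anticipated; one merely has to make sure that the $f$ constructed is nonzero (it is, since the residue class of $b$ modulo $m$ meets $\mathbb{Z}_+$) and that the degenerate cases $b=0$, where $P=m\mathbb{Z}_+$, and $2b\equiv0\pmod{m}$, where the two residue classes coincide, go through without change.
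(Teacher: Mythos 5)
Your proof is correct and follows essentially the same route as the paper's: both argue the contrapositive via Lemma~\ref{matrix1} and exhibit the exponential polynomial with phases $2\pi j/m$ whose nonvanishing set is exactly the residue classes $\pm b \pmod m$ (the paper obtains the same function by taking real parts of $\sum_\mu e^{-2\pi i \mu b/m} e^{2\pi i \mu k/m}$, which reproduces your cosine coefficients up to normalization). Your explicit identification of $P=\{|b+m\ell|:\ell\in\mathbb{Z}\}$ with the union of the two residue classes is the same bookkeeping the paper performs implicitly via the condition $\exp[i2\pi(\pm k-b)/m]\neq 1$.
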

\begin{proof} We will assume that $\{k: a_{|k|}^{-1/2,-1/2}>0\}\cap (n\mathbb{Z}+j)=\emptyset$ for some $n\geq 2$ and $j \in \{0,1,\ldots, n-1\}$ and will show that $K$ is not strictly positive definite, with the help of Lemma \ref{matrix1}.\
If $a_{|k|}^{-1/2,-1/2}>0$, our assumption implies that $\exp[i2\pi(\pm k-j)/n] \neq 1$.\
Defining $c_\mu=\exp(-i2\pi \mu j/n)$, $\mu=1,2,\ldots,n$, we now have that
\begin{equation*}
\sum_{\mu=1}^n c_\mu e^{i2\pi \mu k/n}=\sum_{\mu=1}^n [e^{i2\pi \mu/n}]^{k-j}=e^{i 2\pi (k-j)/n} \frac{e^{i2\pi (k-j)}-1}{e^{i2\pi (k-j)/n}-1}= 0,
\end{equation*}
and, likewise, $\sum_{\mu=1}^n c_\mu e^{-i2\pi \mu k/n}=0$.\ Thus,
\begin{equation*}
\sum_{\mu=1}^n (\mbox{Re\,}c_\mu) e^{i2\pi \mu k/n}=0, \quad k\in \{k: a_{|k|}^{-1/2,-1/2}>0\}.
\end{equation*}
Since $\mbox{Re\,}c_n \neq 0$, $K$ is not strictly positive definite on $S^1$.
\end{proof}

The proof of the sufficiency of the condition presented above demands arguments from analytic number theory.

\begin{thm} Let $K$ be a real, continuous, isotropic and positive definite kernel on $S^1$.\ In order that it be strictly positive definite is sufficient that, in the representation (\ref{PDM}) for $K_r^1$, the set $\{k: a_{|k|}^{-1/2,-1/2}>0\}$ intersects every full arithmetic progression in $\mathbb{Z}$.
\end{thm}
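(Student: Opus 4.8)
The plan is to argue by contradiction: a failure of strict positive definiteness will, via Lemma~\ref{matrix1}, produce a nonzero exponential polynomial that vanishes on a very large set of integers, and a structure theorem for the integer zero set of such a polynomial will then contradict the hypothesis on $\{k:a_{|k|}^{-1/2,-1/2}>0\}$.

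So suppose $K$ is not strictly positive definite. By Lemma~\ref{matrix1} there is a nonzero $f(k)=\sum_{\mu=1}^{n}c_\mu e^{i\theta_\mu k}$ with the $\theta_\mu\in[0,2\pi)$ distinct and the $c_\mu$ real, vanishing on $\mathcal{A}:=\{k\in\mathbb{Z}_+:a_k^{-1/2,-1/2}>0\}$; discarding the terms with vanishing coefficient, which cannot exhaust the sum since $f\neq0$, I may assume $n\geq1$ and every $c_\mu\neq0$. I extend $f$ to all of $\mathbb{Z}$ by the same formula. Since the $c_\mu$ are real, $f(-k)=\overline{f(k)}$ for $k\in\mathbb{Z}$, so the zero set $Z:=\{k\in\mathbb{Z}:f(k)=0\}$ is symmetric; as $\mathcal{A}\subseteq Z$, this yields $\{k\in\mathbb{Z}:a_{|k|}^{-1/2,-1/2}>0\}=\mathcal{A}\cup(-\mathcal{A})\subseteq Z$. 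Note also that $f$ cannot vanish at $n$ consecutive integers, because that would be a Vandermonde system in the distinct nodes $e^{i\theta_\mu}$, forcing all $c_\mu=0$.

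Now comes the number-theoretic step. Viewing $f$ as a linear recurrence sequence whose characteristic roots $e^{i\theta_\mu}$ are all nonzero, the theorem of Skolem, Mahler and Lech says that $Z$ is a union of finitely many full arithmetic progressions $n_j\mathbb{Z}+r_j$, $j=1,\dots,m$, together with a finite set $F$. Put $L:=\operatorname{lcm}(n_1,\dots,n_m)$. If the periodic part $\bigcup_j(n_j\mathbb{Z}+r_j)$ did not exhaust $\mathbb{Z}$, then some residue class $L\mathbb{Z}+r_0$ would be disjoint from it (each class modulo $L$ is, for every $j$, either contained in $n_j\mathbb{Z}+r_j$ or disjoint from it, since $n_j\mid L$), so $Z\cap(L\mathbb{Z}+r_0)\subseteq F$; choosing an integer $M>|F|$ and looking at the $M$ pairwise disjoint progressions $LM\mathbb{Z}+s$ with $s\equiv r_0\ (\mathrm{mod}\ L)$, not all of them can meet $Z$ (else $F$ would contain $M$ distinct points), so one of them is disjoint from $Z\supseteq\{k\in\mathbb{Z}:a_{|k|}^{-1/2,-1/2}>0\}$, contradicting the hypothesis that this set meets every full arithmetic progression. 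Hence $\bigcup_j(n_j\mathbb{Z}+r_j)=\mathbb{Z}$, so $Z=\mathbb{Z}$, and then $f$ vanishes at $n$ consecutive integers, which is impossible. Thus no such $f$ exists, and $K$ is strictly positive definite.

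The Vandermonde observations and the residue-class bookkeeping are routine. The genuinely hard ingredient — the point where, as the paper signals, deeper number theory is unavoidable — is the structural control of the integer zero set of $f$: one either invokes Skolem--Mahler--Lech directly, or one splits the frequencies $\theta_\mu$ according to their rational dependence modulo $2\pi$ and handles the resulting pieces by periodicity together with Kronecker/Weyl equidistribution, which is far more delicate than anything used in the necessity direction. I expect this to be the main obstacle; everything else is mechanical.
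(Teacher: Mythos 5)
Your proof is correct and follows essentially the same route as the paper's: reduce to exponential sums via Lemma \ref{matrix1}, view the sum as a linear recurrence sequence, invoke the Skolem--Mahler--Lech theorem on its integer zero set, and use an lcm/residue-class argument to produce a full arithmetic progression disjoint from that zero set, contradicting the hypothesis. Your treatment of the finite exceptional set (splitting into $M>|F|$ disjoint sub-progressions) and the Vandermonde observation ruling out $Z=\mathbb{Z}$ are just slightly more explicit versions of steps the paper states more briefly.
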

\begin{proof} Fix $n\geq 2$, distinct points $\theta_1, \theta_2, \ldots, \theta_n$ in $[0,2\pi)$ and $c_1, c_2, \ldots, c_n$ real numbers, not all zero.\ We intend to show that if $\{k: a_{|k|}^{-1/2,-1/2}>0\}$ intersects every full arithmetic progression in $\mathbb{Z}$, then $\sum_{\mu=1}^n c_\mu e^{i\theta_\mu k}\neq 0$ for at least one $k \in \{k : a_k^{-1/2,-1/2}>0\}$.\ In order to do that, we define
\begin{equation*}
b_k=\sum_{\mu=1}^n c_\mu e^{i\theta_\mu k}, \quad k\in \mathbb{Z},
\end{equation*}
and consider the set $\{k: b_k=0\}$.\ Since this set is a linear recurrence, the classical Skolem-Mahler-Lech theorem (\cite[p.25]{everest}) asserts that $\{k : b_k=0\}$ coincides with the union of a finite set $F$ and a finite number of full arithmetic progressions, say, $d_\nu\mathbb{Z}+j_\nu$, $\nu=1,2,\ldots, m$.\ Since the $c_\mu$ are not all zero, $\{k:b_k=0\}\neq \mathbb{Z}$.\ This information allows us to conclude that $\{k: b_k=0\}$ has an empty intersection with at least one full arithmetic progression in $\mathbb{Z}$.\ Indeed, choose $l\in \mathbb{Z}\setminus \{k:b_k=0\}$ and let $p$ be the least common multiple of all the $d_\nu$.\ If for a fixed $\nu$, $p m+l=d_\nu \overline{m} +j_\nu$ for two integers $m$ and $\overline{m}$, then $l=(\overline{m}-pm/d_\nu)d_\nu+j_\nu \in d_\nu\mathbb{Z}+j_\nu$, a contradiction.\ Thus, if the finite set in the union decomposition of $\{k:b_k=0\}$ is empty, the arithmetic progression $p\mathbb{Z}+l$ satisfies what is required.\ Otherwise, we can pick a subset of $p\mathbb{Z}+l$ that is an arithmetic progression itself and avoids the set $F$.\ It is now clear that if $\{k: a_{|k|}^{-1/2,-1/2}>0\}$ intersects every full arithmetic progression in $\mathbb{Z}$, then it cannot be a subset of $\{k: b_k=0\}$.\ Therefore, there exists at least one $k\in \mathbb{Z}$ for which $a_{|k|}^{-1/2,-1/2}>0$ and $\sum_{\mu=1}^n c_\mu e^{i\theta_\mu k}\neq 0$.\ Since the $c_\mu$ are real, then $\sum_{\mu=1}^n c_\mu e^{i\theta_\mu k}\neq 0$ for at least one $k \in \{k: a_k^{-1/2,-1/2}>0\}$.\ Lemma \ref{matrix} implies that $K$ is strictly positive definite on $S^1$.
\end{proof}

Before stating the main result in this section, we will prove the following lemma.

\begin{lem} Let $K$ be a subset of $\mathbb{Z}$.\ If it intersects every arithmetic progression in $\mathbb{Z}$ then it intersects
every arithmetic progression in $\mathbb{Z}$ infinitely many times.
\end{lem}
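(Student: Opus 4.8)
The plan is to prove the contrapositive: if $K$ meets some arithmetic progression only finitely often, then $K$ misses some arithmetic progression entirely, contradicting the hypothesis. So I would begin by assuming that $K\cap(a\mathbb{Z}+b)$ is a finite set $F$ for some integers $a\geq 1$ and $b\in\mathbb{Z}$, and set $r:=|F|$.

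The main point is the elementary observation that $a\mathbb{Z}+b$ splits as the disjoint union of the $r+1$ full arithmetic progressions $a(r+1)\mathbb{Z}+(b+ac)$, $c=0,1,\ldots,r$:
\begin{equation*}
a\mathbb{Z}+b=\bigcup_{c=0}^{r}\bigl(a(r+1)\mathbb{Z}+(b+ac)\bigr).
\end{equation*}
Since $F$ has only $r$ elements, the pigeonhole principle produces an index $c_0$ for which the progression $P:=a(r+1)\mathbb{Z}+(b+ac_0)$ contains no point of $F$.

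Because $P\subseteq a\mathbb{Z}+b$, we get $K\cap P=K\cap(a\mathbb{Z}+b)\cap P=F\cap P=\emptyset$, so $K$ avoids the full arithmetic progression $P$, contradicting the assumption that $K$ meets every arithmetic progression. Hence no progression $a\mathbb{Z}+b$ is hit only finitely often by $K$, i.e. $K$ intersects every arithmetic progression infinitely many times.

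There is really no hard part here: the only things to verify are that each $a(r+1)\mathbb{Z}+(b+ac)$ is a genuine two-sided arithmetic progression and that the displayed decomposition is a partition of $a\mathbb{Z}+b$, both of which are immediate from reducing the ``index'' of an element of $a\mathbb{Z}+b$ modulo $r+1$. (If $r=0$ the argument degenerates to ``$F=\emptyset$, so $K$ already misses $a\mathbb{Z}+b$'', which is consistent with the claim.)
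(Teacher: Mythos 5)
Your proof is correct, and its overall strategy coincides with the paper's: both arguments show that if $K$ met some full progression $a\mathbb{Z}+b$ in only a finite set $F$, one could exhibit a full arithmetic sub-progression of $a\mathbb{Z}+b$ disjoint from $F$, hence disjoint from $K$, contradicting the hypothesis. Where you differ is in how the avoiding sub-progression is produced. The paper sets $p$ equal to the maximum of the absolute values of the indices of the elements of $F$ inside $n\mathbb{Z}+j$ and writes down the explicit progression $2n(2p+1)\mathbb{Z}+2np+j$, then argues it lies in $n\mathbb{Z}+j$ and misses every element of $F$ "by the choice of $p$"; that verification requires a small estimate and, as stated, tacitly assumes $p>0$, which fails in the degenerate case $F=\{j\}$. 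Your partition of $a\mathbb{Z}+b$ into the $r+1$ disjoint classes $a(r+1)\mathbb{Z}+(b+ac)$, $c=0,\ldots,r$, followed by pigeonhole replaces that computation with pure counting: with only $r$ points to avoid and $r+1$ cells, some cell contains none of them. This is cleaner, needs no inequalities, and handles the degenerate cases (including $F=\emptyset$) uniformly, at the cost of using a slightly larger common difference $a(r+1)$ than strictly necessary.
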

\begin{proof} Assume $K$ intersects every arithmetic progression in $\mathbb{Z}$ and suppose there exist one, say, $n\mathbb{Z}+j$ which intersects $K$ only a finite number of times.\ Let
\begin{equation*}
np_1+j < np_2+j< \cdots < np_\alpha+j
\end{equation*}
be the elements in the intersection and define $p:=\max\{|p_1|,\ldots,|p_\alpha|\}$.\ Since $p>0$ and
\begin{equation*}
0 \leq 2np+j\leq 2np+n-1 = (2p+1)n-1,
\end{equation*}
 the set $2n(2p+1)\mathbb{Z}+2np+j$ is a full arithmetic progression.\ In addition, we have the inclusion $2n(2p+1)\mathbb{Z}+2np+j \subset n\mathbb{Z}+j$ while $K\cap
(2n(2p+1)\mathbb{Z}+2np+j)=\emptyset$ by our choice of $p$.\ This is a contradiction.
\end{proof}

The main theorem in the appendix is this.

\begin{thm}
Let $K$ be a continuous, isotropic and positive definite kernel on $S^1$ and consider the representation (\ref{PDM}) for $K_r^1$.\ The following assertions are equivalent:\\
$(i)$ $K$ is strictly positive definite on $S^1$;\\
$(ii)$ The set $\{k: a_{|k|}^{-1/2,-1/2}>0\}$ intersects every full arithmetic progression in $\mathbb{Z}$;\\
$(iii)$ The set $\{k: a_{|k|}^{-1/2,-1/2}>0\}$ intersects every full arithmetic progression in $\mathbb{Z}$ infinitely many times.
\end{thm}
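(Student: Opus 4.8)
The plan is to obtain the three-way equivalence by stitching together the three results already proved in this section, so that no substantial new argument is needed. I would organize it as (i) $\Leftrightarrow$ (ii) together with (ii) $\Leftrightarrow$ (iii), and then chain the two.

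For (i) $\Leftrightarrow$ (ii): the implication (i) $\Rightarrow$ (ii) is exactly Theorem \ref{necs1}, whose proof shows by contraposition that if $\{k: a_{|k|}^{-1/2,-1/2}>0\}$ misses some full arithmetic progression $n\mathbb{Z}+j$ with $n\geq 2$, then the real scalars $\mathrm{Re}\,c_\mu$ constructed there (not all zero, since $\mathrm{Re}\,c_n\neq 0$) witness, through Lemma \ref{matrix1}, that $K$ is not strictly positive definite. The reverse implication (ii) $\Rightarrow$ (i) is precisely the sufficiency theorem proved immediately above, whose core is the application of the Skolem--Mahler--Lech theorem to the linear recurrence $b_k=\sum_{\mu=1}^n c_\mu e^{i\theta_\mu k}$.

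For (ii) $\Leftrightarrow$ (iii): the implication (iii) $\Rightarrow$ (ii) is immediate, since a subset of $\mathbb{Z}$ meeting a given progression infinitely often in particular meets it at least once. For (ii) $\Rightarrow$ (iii) I would invoke the lemma stated just before this theorem, with its set ``$K$'' taken to be $\{k: a_{|k|}^{-1/2,-1/2}>0\}$: hypothesis (ii) is then exactly that lemma's hypothesis, and assertion (iii) is its conclusion. Combining the two equivalences yields (i) $\Leftrightarrow$ (ii) $\Leftrightarrow$ (iii).

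I do not expect a genuine obstacle; the proof is essentially bookkeeping over results already in hand. The only point warranting a second look is the appeal to the preceding lemma: it is phrased for arbitrary arithmetic progressions, but its proof only ever constructs and uses \emph{full} arithmetic progressions, so it applies verbatim to the hypothesis furnished by (ii). (The degenerate case $K\equiv 0$ is consistent as well: the coefficient set is then empty, meeting no progression, and the zero kernel is not strictly positive definite, so all three assertions are false.)
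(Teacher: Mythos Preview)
Your proposal is correct and matches the paper's intent: the theorem is stated without proof immediately after the necessity theorem, the sufficiency theorem, and the lemma on infinite intersection, so it is meant to follow by exactly the chaining you describe. Your remark that the preceding lemma, though phrased for arbitrary arithmetic progressions, in fact only invokes \emph{full} arithmetic progressions in its proof (and hence applies under hypothesis~(ii)) is a useful clarification the paper leaves implicit.
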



\section*{Acknowledgement} The first author was partially supported by CAPES and CNPq, under grant 141908/2015-7.\ The second one by FAPESP, under grant 2014/00277-5.

\vspace*{2cm}

\noindent V. S. Barbosa and V. A. Menegatto \\
Departamento de Matem\'atica,\\
ICMC-USP - S\~ao Carlos, Caixa Postal 668,\\
13560-970 S\~ao Carlos SP, Brasil\\ e-mails: victorrsb@gmail.com; menegatt@icmc.usp.br


\begin{thebibliography}{1}

\bibitem{askey} R. Askey, {Orthogonal polynomials and special functions}.\ Society for Industrial and Applied Mathematics, Philadelphia, Pa., 1975.

\bibitem{barbosa} V. S. Barbosa, V. A. Menegatto, Differentiable positive definite functions on two-point homogeneous spaces. J. Math. Anal. Appl. 434 (2016), no. 1, 698-712.

\bibitem{bochner} S. Bochner, {Hilbert distances and positive definite functions}.\  Ann. of Math. 42 (1941), 647-656.

\bibitem{bordin} B. Bordin, A. K. Kushpel, J. Levesley and S. A. Tozoni, {Estimates of n-widths of Sobolev's classes on compact globally symmetric spaces of rank one}.\  J. Funct. Anal.
 202 (2003), no. 2, 307-326.

\bibitem{brown} G. Brown, and Feng Dai, {Approximation of smooth functions on compact two-point homogeneous spaces}.\  J. Funct. Anal. 220 (2005), no. 2, 401-423.

\bibitem{cartan} E. Cartan, {Sur certaines formes Riemanniennes remarquables des g\'eom\'etries \`a fondamental simple}.\ (French)  Ann. Sci. École Norm. Sup. (3) 44 (1927), 345-467.

\bibitem{chen} D. Chen, V. A. Menegatto and Xingping Sun, {A necessary and sufficient condition for strictly positive definite functions on spheres}. Proc. Amer. Math. Soc. 131 (2003), no. 9, 2733-2740.

\bibitem{everest} G. Everest, A. van der Poorten, I. Shparlinski and T. Ward, {Recurrence sequences}.\ Mathematical Surveys and Monographs, 104.\ American Mathematical Society, Providence, RI, 2003.

\bibitem{gangolli} R. Gangolli, {Positive definite kernels on homogeneous spaces and certain stochastic processes related to L\'evy's Brownian motion of several
parameters}.\ Ann. Inst. H. Poincaré Sect. B (N.S.) 3 (1967), 121-226.

\bibitem{gine} E. Giné, {The addition formula for the eigenfunctions of the Laplacian}.\  Advances in Math. 18 (1975), no. 1, 102-107.

\bibitem{helgason} S. Helgason, {The Radon transform on Euclidean spaces, compact two-point homogeneous spaces and Grassmann manifolds}. Acta Math. 113 (1965), 153-180.

\bibitem{koo} T. Koornwinder, {The addition formula for Jacobi polynomials and spherical harmonics. Lie algebras: applications and computational methods} (Conf., Drexel Univ., Philadelphia, Pa., 1972).\  SIAM J. Appl. Math. 25 (1973), 236-246

\bibitem{kush1} A. Kushpel and S. A. Tozoni, {Entropy and widths of multiplier operators on two-point homogeneous spaces}.\ Constr. Approx. 35 (2012), no. 2, 137-180.

\bibitem{mene}  V. A. Menegatto, C. P. Oliveira and A. P. Peron, {Strictly positive definite kernels on subsets of the complex plane}.\  Comput. Math. Appl. 51 (2006), no. 8, 1233-1250.

\bibitem{nagano} T. Nagano, {Homogeneous sphere bundles and the isotropic Riemann manifolds}.\ Nagoya Math. J. 15 (1959), 29-55.

\bibitem{platonov} S. S. Platonov, {On some problems in the theory of the approximation of functions on compact homogeneous manifolds}.\ (Russian) Mat. Sb. 200 (2009), no. 6, 67--108; translation in Sb. Math. 200 (2009), no. 5-6, 845--885.

\bibitem{szego} G. Szeg\"{o}, {Orthogonal polynomials}.\ Fourth edition. American Mathematical Society, Colloquium Publications, Vol. XXIII, American Mathematical Society, Providence, R.I., 1975.

\bibitem{tirao} J. A. Tirao, {Antipodal manifolds in compact symmetric spaces of rank one}.\  Proc. Amer. Math. Soc. 72 (1978), no. 1, 143-149.

\bibitem{xucheney} Yuan Xu and E. W. Cheney, {Strictly positive definite functions on spheres}.\ Proc. Amer. Math. Soc. 116 (1992), no. 4, 977-981.

\bibitem{wang} Hsien-Chung Wang, {Two-point homogeneous spaces}.\  Ann. Math. 55 (1952), no. 2, 177-191.

\bibitem{ziegel} J. Ziegel, {Convolution roots and differentiability of isotropic positive definite functions on spheres}.\  Proc. Amer. Math. Soc. 142 (2014), no. 6, 2063-2077.
\end{thebibliography}
\end{document}